  \colorlet{myblue}{blue!50!black}
  \colorlet{myred}{red!50!black}
  \colorlet{mygreen}{green!50!black}
  \pgfplotsset{compat=1.12}
  \setlist{leftmargin=0.33in}
  \setlist[enumerate]{label=(\roman*)}
  \setlist[itemize]{label={--}}
\theoremstyle{plain}
\newtheorem{theorem}{Theorem}[section]
\newtheorem{corollary}[theorem]{Corollary}
\newtheorem{lemma}[theorem]{Lemma}
\newtheorem{proposition}[theorem]{Proposition}
\newtheorem{definition}[theorem]{Definition}
\newtheorem{sard}[theorem]{Sard's Theorem}
\newtheorem{nonsmooth-sard}[theorem]{Nonsmooth Sard's Theorem}
\newtheorem{monotonicity}[theorem]{Monotonicity Lemma}
\newtheorem{definable-choice}[theorem]{Definable Choice Lemma}
\newtheorem{curve-selection}[theorem]{Curve Selection Lemma}
\newtheorem{assumption}{Assumption}
\theoremstyle{definition}
\newtheorem{remark}[theorem]{Remark}
\newtheorem{example}[theorem]{Example}
\renewcommand{\ALG@name}{ESQM}
\renewcommand{\fnum@algorithm}{\fname@algorithm}
\crefname{ALC@unique}{Step}{Steps}
\crefname{prb}{Problem}{Problems}
\crefname{lem}{Lemma}{Lemmas}
\renewcommand{\geq}{\geqslant}
\renewcommand{\leq}{\leqslant}
\newcommand{\N}{\mathbb{N}}
\newcommand{\R}{\mathbb{R}}
\newcommand{\A}{\mathcal{A}}
\newcommand{\Reg}{\mathcal{A}_\text{reg}}
\newcommand{\Sing}{\mathcal{A}_\text{sing}}
\newcommand{\toto}{\rightrightarrows}
\newcommand{\prtb}{\alpha}
\newcommand{\KKT}{\textsc{kkt}}
\newcommand{\NLP}{\mathcal{P}_\textsc{nlp}}
\newcommand{\PNLP}{\mathcal{P}_\prtb}
\newcommand{\PKKT}{\mathcal{P}_\prtb^\KKT}
\DeclareMathOperator{\dom}{dom}
\DeclareMathOperator{\graph}{graph}
\DeclareMathOperator{\dist}{dist}
\DeclareMathOperator{\co}{co}
\DeclareMathOperator{\spn}{span}
\DeclareMathOperator{\val}{val}
\DeclareMathOperator{\argmin}{arg\,min}
\def\<#1,#2>{\langle #1, #2\rangle}
\newcommand{\mybinom}[3][0.8]{\scalebox{#1}{$\dbinom{#2}{#3}$}}
\newcommand{\ESQM}{\hyperref[algo:esqm]{ESQM}\xspace}
\newcommand{\TheTitle}{Qualification Conditions in Semi-algebraic Programming} 
\newcommand{\TheAuthors}{J. Bolte, A. Hochart, and E. Pauwels}
\title{{\TheTitle}}
\date{March 7, 2018}
\author[J.~Bolte]{J\'er\^ome Bolte}
\address{Toulouse School of Economics, Universit\'e Toulouse 1 Capitole, Toulouse, France.}
\email{jerome.bolte@tse-fr.eu}
\author[A.~Hochart]{Antoine Hochart}
\address{Universidad Adolfo Ib{\'a}{\~n}ez, Santiago, Chile.}
\email{antoine.hochart@gmail.com}
\author[E.~Pauwels]{Edouard Pauwels}
\address{Institut de Recherche en Informatique de Toulouse,
Universit\'e Paul Sabatier, Toulouse, France.}
\email{edouard.pauwels@irit.fr}
\thanks{This work was sponsored by the Air Force Office of Scientific Research,
Air Force Material Command, USAF, under grant number FA9550-15-1-0500.
This work was also partially supported by PGMO.}
\keywords{Constraint qualification, Mangasarian-Fromovitz, Arrow-Hurwicz-Usawa,
Lagrange multipliers, optimality conditions, tame programming.}
\subjclass[2010]{Primary: 26D10; Secondary: 32B20, 49K24, 49J52, 37B35, 14P15.}
\begin{document}

\maketitle

\begin{abstract}
  For an arbitrary finite family of semi-algebraic/definable functions, we consider
  the corresponding inequality constraint set and we study qualification conditions for
  perturbations of this set.
  In particular we prove that all positive diagonal perturbations, save perhaps a finite
  number of them, ensure that any point within the feasible set satisfies
  Mangasarian-Fromovitz constraint qualification.
  Using the Milnor-Thom theorem, we provide a bound for the number of singular perturbations
  when the constraints are polynomial functions.
  Examples show that the order of magnitude of our exponential bound is relevant. 
  Our perturbation approach provides a simple protocol to build sequences of ``regular''
  problems approximating an arbitrary  semi-algebraic/definable problem.
  Applications to sequential quadratic programming methods and sum of squares relaxation are provided.
\end{abstract}

\section{Introduction}

Constraint qualification conditions ensure that normal cones are finitely generated by the gradients of
the active constraints.
When considering an optimization problem, this fact immediately provides Lagrange/KKT necessary optimality
conditions which are at the root of most resolution methods (see e.g., \cite{NW06,Ber16}).
Finding settings in which qualification conditions are easy to formulate and easy to verify
is thus of fundamental importance.
In a convex framework, the power of Slater's condition consists in its extreme simplicity:
the resolution of a ``simple'' problem (e.g., finding an interior point), often done directly
or through routine computations, guarantees the regularity of the problem.

In a nonconvex setting, the question becomes much more delicate but the wish is the same:
describing normal cones as gradient-generated cones for deriving KKT conditions
(see e.g., \cite{RW98}).
Contrary to what happens for convex functions, the knowledge of the functions at one point
does not capture enough information about the global geometry to infer well-posedness
everywhere\footnote{Observe though that the local knowledge of a polynomial function implies
the knowledge of the function everywhere.
But, as far as we know, this fact has never given birth to any simple qualification condition.}.
Very smooth and simple problems sa\-tis\-fying all possible natural conditions can generally
present a failure of qualification, for which the normal cone is not generated
by the gradients of the active constraints, and thus KKT conditions cannot apply.
In dimension two a typical failure is a cusp, illustrated in \cref{fig:cusp}
for the constraint set
\begin{equation}
  \label{eq:cusp}
  D = \big\{ (x_1,x_2) \in \R^2 \mid x_1^3 + x_2 \leq 0 , \; x_1^3 - x_2 \leq 0 \big\} .
\end{equation}

\begin{figure}[tb]
  \centering
  \def\scale{0.58}
  \def\axes{
    axis lines=none,
    xmin=-1, xmax=0.75, 
    ymin=-1, ymax=1,
    enlargelimits
  }
  \hspace{\stretch{1}}
  \begin{tikzpicture}
    \begin{axis}[\axes,scale=\scale]
      \addplot[name path=g1,gray!50,densely dashed,domain=-1:0.75] {-x^3};
      \addplot[name path=g2,gray!50,densely dashed,domain=-1:0.75] {x^3};
      \addplot[name path=g11,myblue,thick,domain=-1:0] {-x^3};
      \addplot[name path=g22,myblue,thick,domain=-1:0] {x^3};
      \addplot[pattern=north east lines,pattern color=myblue]fill between[of=g11 and g22];
      \draw[red,fill] (0,0) circle (1.5pt);
      \draw[myblue] (-0.3, -1) node {\small $D=C_0$};
    \end{axis}
  \end{tikzpicture}
  \hspace{\stretch{1}}
  \begin{tikzpicture}
    \begin{axis}[\axes,scale=\scale]
      \addplot[name path=g1,gray!50,densely dashed,domain=-1:0.75] {-x^3-0.1};
      \addplot[name path=g2,gray!50,densely dashed,domain=-1:0.75] {x^3+0.1};
      \addplot[name path=g11,myblue,thick,domain=-1:-(0.1)^(1/3)] {-x^3-0.1};
      \addplot[name path=g22,myblue,thick,domain=-1:-(0.1)^(1/3)] {x^3+0.1};
      \addplot[pattern=north east lines,pattern color=myblue]fill between[of=g11 and g22];
      \draw[myblue] (-0.3, -1) node {\small $C_{-0.1}$};
    \end{axis}
  \end{tikzpicture}
  \hspace{\stretch{1}}
  \begin{tikzpicture}
    \begin{axis}[\axes,scale=\scale]
      \addplot[name path=g1,gray!50,densely dashed,domain=-1:0.75] {-x^3+0.1};
      \addplot[name path=g2,gray!50,densely dashed,domain=-1:0.75] {x^3-0.1};
      \addplot[name path=g11,myblue,thick,domain=-1:(0.1)^(1/3)] {-x^3+0.1};
      \addplot[name path=g22,myblue,thick,domain=-1:(0.1)^(1/3)] {x^3-0.1};
      \addplot[pattern=north east lines,pattern color=myblue]fill between[of=g11 and g22];
      \draw[myblue] (-0.3, -1) node {\small $C_{0.1}$};
    \end{axis}
  \end{tikzpicture}
  \hspace{\stretch{1}}
  \caption{On the left, the constraint set $D$, see \labelcref{eq:cusp}:
    the bullet highlights a cusp, for us a failure of constraint qualification.
    Middle and right: negative and positive perturbations of $D=C_0$ make the cusp disappear.}
\label{fig:cusp}
\end{figure}
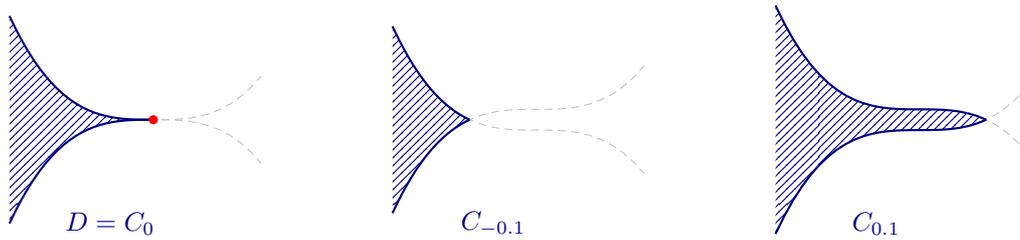

Since in this general setting simple qualification conditions are not available, se\-veral researchers have
considered the problem under the angle of perturbations.
To our knowledge, the first work in this direction was proposed by Spingarn and Rockafellar \cite{RS79}.
Given differentiable functions $g_1,\dots,g_m:\R^n\to \R$ and a constraint set
$C = \{ x \in \R^n \mid g_1(x) \leq 0, \dots, g_m(x) \leq 0\}$, they indeed introduced
the perturbed constraint sets
\[
  C_\mu: = \left[ g_1 \leq \mu_1, \dots, g_m \leq \mu_m \right] \quad
  \text{where $\mu = (\mu_1, \dots, \mu_m) \in \R^m$}
\]
and studied their properties regarding qualification conditions. 
In the sequel, a set $C_{\mu}$ for which qualification conditions hold at each feasible point
is said to be {\em regular}.
Accordingly the corresponding perturbation $\mu$ is called {\em regular}.
When $m = 1$, one obviously recovers the usual definition of a regular value of a function 
(see e.g., Milnor's monograph \cite{Milnor65}), and one guesses that a major role
will be played by Sard-type theorems.
Recall that original Sard's theorem (see e.g., \cite{Milnor65}) expresses that
the regular values of a sufficiently smooth function are generic within $\R^m$.
For $m \geq 1$, the work on  perturbed constraint sets by Spingarn and Rockafellar
\cite{RS79} dealt with the genericity of regular values using  a quite restrictive notion of qualification condition.
Works by Fujiwara \cite{Fuj82}, Scholtes and St\"ohr \cite{SS01} or Nie \cite{Nie14} gave further insights
on different other aspects but with the same type of qualification assumptions.
When the mappings $g_i$ are semi-algebraic (or definable), the application of
definable nonsmooth Sard's theorem of Ioffe \cite{Iof07} yields stronger results
since in that case, regularity exactly corresponds to sets satisfying Mangasarian-Fromovitz
constraint qualification everywhere (see \cref{thm:genericity-definable}).
These aspects are discussed in detail 
in \Cref{sec:pertubed-constraint-sets}.

Genericity results have been the object of a recent revival in connection with
semi-algebraic optimization: see  Bolte, Daniilidis and Lewis \cite{BDL11}, Daniilidis and Pang \cite{pang},
Drusvyatskiy, Ioffe and Lewis \cite{DIL16}, Lee and Pham \cite{LP15}, H\`a and Pham \cite{HP17}.
An original feature of our work is to exploit the fact that genericity is a relative concept.
A property is indeed generic within some given family, but if one considers smaller families,
genericity may no longer hold.
It is therefore important to identify the smallest possible families in order to strengthen
genericity results and to be able to exploit them for improving effective optimization techniques
(e.g., algorithms, homotopy methods).
In this regard, we address in \Cref{sec:finite-singularities} the following two  questions:

\smallskip
\begin{itemize}
    \em
    \item How do we perturb to ensure regularity?
      In other words, how can we build simple problems $(\mathcal{P}_\prtb)_{\prtb \in \R_+}$
      which are regular and whose value, $\val \PNLP$, converges to the one of the original
      problem, $\val ({\mathcal P_0})$?
    \item  Can we go beyond mere genericity and quantify the number of singular (i.e., nonregular)
    values in the polynomial case?
\end{itemize}
\smallskip

Our first result, \`a la Morse-Sard, relies on definability assumptions of the data
(e.g., semi-algebraicity) and provides one-parameter families of regular constraint sets.
This is done by showing that any positive semiline $\R_+ \, v$, with $v \in \R_{++}^m$,
bears only finitely many singular perturbations.
For instance if we let $\bm{\prtb} := (\prtb, \dots, \prtb)$, the sets
$( C_{\bm{\prtb}} )_{\prtb \in \R_+}$ are regular for all $\prtb$ positive small enough,
see  \cref{fig:cusp} for an illustration.
When some of the constraint functions are convex, our approach is considerably simplified:
we show indeed that a ``partial Slater's condition'' allows to restrict the perturbation
approach to nonconvex functions.

The strength of our results is well conveyed by the following general approximation fact:
for any objective $f$ and for $\prtb$ small enough, we are able to build explicit
well-posed problems
\begin{equation*}
  \tag{$\mathcal{P}_{\prtb}$}
  \text{minimize } f(x) \quad \text{s.t.} \enspace x \in C_{\bm{\prtb}}
\end{equation*}
which satisfy, under mild conditions,
$\lim _{\prtb \to 0^+} \val (\mathcal{P}_{\prtb}) = \val (\mathcal{P}_0)$.
Our approach opens the way to continuation methods (see \cite{homotopy} and references therein) or to more direct diagonal methods
as shown in our last section.

A natural question which immediately emerges is whether it is possible to count
the number of singular perturbations.
When assuming further that the data are polynomial functions whose degree is bounded by $d$,
we show by using Milnor-Thom's theorem that the number of singular values for problems of
the type $(\mathcal{P}_{\prtb})$ is lower than $d (2d-1)^n (2d+1)^m$.
Examples show that in general the bound is indeed exponential, even in the quadratic case
with only one of the $g_i$ being nonconvex.  The worst-case bound described in this work is a rather negative result for semi-algebraic programming, in the sense that
it shows that there 
are instances for which  singular values are so clumped and numerous that perturbation techniques are uneffective. The fact that worst-case instances of general semi-algebraic programming is out of reach of modern  methods is a well-known fact since the pioneering work~\cite{smale89}.
It would be interesting to recast our findings along this perspective. For instance, our results suggest that some constraint sets might have such a complex nature that most 
local methods are inapplicable in practice, even after perturbation. On the other hand, as suggested by real-life 
problems, regular instances are numerous in practice.
This shows the need to understand further the geometric factors or probabilistic priors on the constraints that could make singular values less numerous or at least favorably  distributed. 

In \Cref{sec:applications}, we provide two theoretical algorithmic illustrations of our results. As a general fact, our diagonal perturbation scheme can be used in conjunction with any algorithm whose behavior relies on constraint qualification assumptions. We illustrate this principle with exact semidefinite programming relaxations in polynomial programming, for which well-behaved constructions were proposed for regular problems \cite{DNS06,DNP07,ABM14}. A second application of our general results is given by  a class of sequential quadratic programming methods, SQP for short. SQP methods are widespread in practical applications, see e.g,  \cite{Fle85,fred,Aus13}. 
Convergence analysis of such methods usually requires very strong
qualification conditions in order to handle regularity and infeasibility issues for  minimizing sequences. 
We show how our perturbation results provide a natural and strong tool for convergence analysis in the framework of semi-algebraic optimization.

\section{Regular and singular perturbations of constraint sets}
\label{sec:pertubed-constraint-sets}

\subsection{Notation and definitions}

\subsubsection*{Constraint sets and qualification conditions}

Let us consider the general nonlinear optimization problem
\begin{equation}
  \label[prb]{pb:NLP}
  \tag{$\NLP$}
  \begin{aligned}
    \text{minimize} & \enspace f(x)  \\
    \text{subject to} & \enspace g_1(x) \leq 0 , \dots , g_m(x) \leq 0 ,  \\
    & \enspace h_1(x) = 0 , \dots , h_r(x) = 0 , 
  \end{aligned}
\end{equation}
where $f$, $g_1, \dots, g_m, h_1, \dots, h_r$ are differentiable functions
from $\R^n$ to $\R$.
We denote by
\begin{equation*}
  C \; = \; [ g_1 \leq 0, \dots, g_m \leq 0 ] \; := \;
  \{x \in \R^n \mid g_1(x) \leq 0, \dots, g_m(x) \leq 0 \}
\end{equation*}
the inequality constraint set, and by
\begin{equation*}
  M \; = \;  [ h_1 = 0, \dots, h_r = 0 ] \; := \;
  \{x \in \R^n \mid h_1(x) = 0, \dots, h_r(x) = 0 \}
\end{equation*}
the equality constraint set.
For $x \in C$, we define the set of active constraints by
\[
  I(x) \; := \; \{ 1 \leq i \leq m \mid g_i(x) = 0 \} .
\]
We next recall a standard regularity condition.
\begin{definition}[Mangasarian-Fromovitz constaint qualification]
  \label{def:MFCQ}
  A point $x \in C \cap M$ is said to satisfy the {\em Mangasarian-Fromovitz
  constraint qualification (MFCQ)} if the gradient vectors $\nabla h_j(x)$, $j=1,\dots,r$,
  are linearly independent and there exists $y \in \R^n$ such that
  \begin{equation}
    \label{eq:MFCQ}
    \left\{
      \begin{aligned}
        \<y,\nabla h_j(x)> = 0 , & \quad j = 1,\dots, r , \\
        \<y,\nabla g_i(x)> < 0 , & \quad i \in I(x) .
      \end{aligned}
    \right.
  \end{equation}
  If there is no equality constraint, 
  this condition is then called {\em Arrow-Hurwicz-Uzawa constraint qualification}.
  We say that MFCQ holds throughout $C \cap M$ if it is satisfied at every point in $C \cap M$. 
\end{definition}

\begin{remark}
  By a straightforward application of Hahn-Banach's separation theorem,
  the existence of a vector $y \in \R^n$ satisfying condition \labelcref{eq:MFCQ} is equivalent to
  \[
    \co \, \{\nabla g_i(x) \mid i \in I(x) \} \; \cap \;
    \spn \, \{ \nabla h_j(x) \mid 1 \leq j \leq r \} = \emptyset
  \]
  where $\co X$ denotes the convex hull of any subset $X \subset \R^n$, and
  $\spn X$ its linear span.
  If there is no equality constraint, this characterization simply reads
  \[
    0 \notin \co \, \{\nabla g_i(x) \mid i \in I(x) \} .
  \]
  \label{rmk:Hahn-Banach}
\end{remark}

Let us briefly remind that MFCQ guarantees the existence of Lagrange multipliers at
minimizers of \cref{pb:NLP}:
if a local minimizer $\bar{x}$ of $f$ on $C \cap M$ satisfies MFCQ,
then there exist multipliers $\lambda_1,\dots,\lambda_m \in \R_+ := [0,+\infty)$ and
$\kappa_1,\dots,\kappa_r \in \R$ such that
\begin{equation}
  \label{eq:KKT}
  \left\{
    \begin{aligned}
      & \nabla f(\bar{x}) + \sum_{i=1}^m \lambda_i \nabla g_i(\bar{x}) +
      \sum_{j=1}^r \kappa_j \nabla h_j(\bar{x}) = 0 , \\
      & \lambda_i \, g_i(\bar{x}) = 0 , \quad i = 1,\dots,m .
    \end{aligned}
  \right.
\end{equation}
Any feasible point satisfying these conditions is called a {\em Karush-Kuhn-Tucker (KKT) point}.

\begin{remark}[Clarke regularity and MFCQ]
  \label{rmk:Clarke-regularity}
  A more geometrical way of formulating the existence of Lagrange multipliers consists in
  interpreting the gradients of active constraints as generators of a cone normal to
  the constraint set.
  In the terminology of modern nonsmooth analysis, assuming that there are only inequality constraints
  in \cref{pb:NLP}, this amounts to the normal regularity of the set
  $C = [g_1 \leq 0,\dots, g_m \leq 0]$.
  We next explain this fact.

  Being given a nonempty closed subset $X \subset \R^n$, the {\em Fr\'echet normal cone}
  to $X$ at point $\bar{x} \in X$ is defined by
  \[
    \hat{N}_X(\bar{x}) := 
    \{ v \in \R^n \mid \<v,x-\bar{x}> \leq o(\|x-\bar{x}\|), \; x \in X \} .
  \]
  It is immediate to prove that any solution to \cref{pb:NLP} satisfies
  \begin{equation}
    \label{eq:Fermat}
    \nabla f(x) + \hat{N}_C (x) \ni 0
  \end{equation}
  which suggests to express $\hat{N}_C(x)$ with the initial data $g_1,\dots,g_m$.
  To do so, let us introduce the {\em limiting normal {\rm or} Mordukhovich normal
  cone\footnote{See the pioneering work \cite{morduk}.}} to $X$
  at $\bar{x}$, denoted by $N_X(\bar{x})$ and defined by
  \[
    v \in N_X(\bar{x}) \iff \exists \, x_n \to \bar{x} , \quad
    \exists \, v_n \to v , \quad  v_n \in \hat{N}_X(x_n) .
  \]
  The set $X$ is called {\em regular}  at $\bar{x}$ if
  $\hat{N}_X(\bar{x}) = N_X(\bar{x})$.
  
  By classical results of nonsmooth analysis, if the Mangasarian-Fromovitz constraint
  qualification holds throughout $C$, then $C$ is  regular at every point in $C$,
  see \cite[Th.~6.14]{RW98} or \cite[Th.~7.2.6]{BL06}.
  In addition, we have for all $x \in C$
  \[
    N_{C}(x) = \Big\{ \sum_{i \in I(x)} \lambda_i \; \nabla g_i(x) \mid
    \lambda_i \geq 0, \; i \in I(x) \Big\} ,
  \]
  which combined with \Cref{eq:Fermat} yields the claimed result.
\end{remark}

\subsubsection*{Perturbations of constraint sets}

For $\mu \in \R^m$ and $\nu \in \R^r$, we denote by
\begin{align*}
  C_\mu & := [ g_1 \leq \mu_1, \dots, g_m \leq \mu_m ]
  = \{x \in \R^n \mid g_1(x) \leq \mu_1, \dots, g_m(x) \leq \mu_m \} , \\
  M_\nu & := [ h_1 = \nu_1, \dots, h_r = \nu_r ]
  = \{x \in \R^n \mid h_1(x) = \nu_1, \dots, h_r(x) = \nu_r \} ,
\end{align*}
the perturbed inequality and equality constraint sets of \cref{pb:NLP}, respectively.
Also, we denote by
\[
  \A := \{ (\mu,\nu) \in \R^m \times \R^r \mid C_\mu \cap M_\nu \neq \emptyset \}
\]
the set of {\em admissible perturbations}.

\begin{definition}[Regular/Singular perturbations]
  We say that $(\mu,\nu) \in \mathcal{A}$ is a {\em regular perturbation} if
  the Mangasarian-Fromovitz constraint qualification holds throughout $C_\mu \cap M_\nu$,
  and we denote by $\Reg$ the collection of all regular perturbations:
  \[
    \Reg := \{ (\mu,\nu) \in \mathcal{A} \mid
    \text{MFCQ holds at every } x \in C_\mu \cap M_\nu \} .
  \]
  In contrast, an admissible perturbation $(\mu,\nu) \in \mathcal{A}$ is {\em singular}
  if the Mangasarian-Fromovitz constraint qualification is not satisfied at some point
  of $C_\mu \cap M_\nu$.
  The subset of singular perturbations is given by
  \[
    \Sing := \A \setminus \Reg .
  \]
\end{definition}

Up to an obvious change of definition, we shall use the same notation when there is
no equality constraint.

\subsection{Metric regularity and constraint qualification}

In this subsection, we recall how the Mangasarian-Fromovitz constraint qualification
can be interpreted in terms of metric regularity of some set-valued mapping. 
To that purpose, we gather below some classical notions in  nonsmooth analysis, see  \cite{RW98,Mor06,DR14}.

A set-valued mapping $F: \R^p \toto \R^q$ is a map sending each point of $\R^p$ to
a subset of $\R^q$.
We denote by $\graph F := \{ (x,y) \in \R^p \times \R^q \mid y \in F(x) \}$
the {\em graph} of $F$ and by $\dom F := \{ x \in \R^p \mid F(x) \neq \emptyset \}$
its {\em domain}.

The set-valued mapping $F: \R^p \toto \R^q$ is {\em metrically regular}
at $(\bar{x},\bar{y}) \in \graph F$ if the graph of $F$ is locally closed at $(\bar{x},\bar{y})$
and there exist a positive real number $\kappa$, together with neighborhoods $\mathcal{U}$ and
$\mathcal{V}$ of $\bar{x}$ and $\bar{y}$ respectively, such that
\[
  \dist(x,F^{-1}(y)) \leq \kappa \, \dist(y,F(x))
\]
for all $(x,y) \in \mathcal{U} \times \mathcal{V}$.
Here, $\dist(z,K)$ refers to the distance of any point $z$ of a space endowed
with a norm $\| \cdot \|$ to any subset $K$ of the same space, i.e.,
$\inf_{k \in K} \|k-z\|$.

We now come back to \cref{pb:NLP} and introduce the set-valued mapping
$F: \R^n \toto \R^{m+r}$ defined by
\begin{equation}
  \label{eq:constraint-mapping}
  F(x) \quad=\quad 
  \begin{pmatrix}
    g_1(x) \\ \vdots \\ g_m(x) \\ h_1(x) \\ \vdots \\ h_r(x)
  \end{pmatrix}
  \quad+\quad \R^m_+ \times \{0\}^r
\end{equation}
where $\R_+^m $ is the nonnegative orthant of $\R^m$.
Observe that $(\mu,\nu) \in F(x)$ if and only if $x \in C_\mu \cap M_\nu$.
Also notice that, by continuity of the constraint functions, $\graph F$ is closed.

The following result, due to Robinson, characterizes the points satisfying MFCQ in terms of the mapping $F$.
For a thorough discussion and various proofs, we refer the reader to \cite{Mor06}.  Other approaches are \cite[Ex.~9.44]{RW98}, \cite[Ex.~4F.3]{DR14}
or  \cite[Th.~4.1]{DQZ06} which avoids the use of coderivative calculus.

\begin{theorem}[Robinson]
  The Mangasarian-Fromovitz constraint qualification holds at point $x \in C_\mu \cap M_\nu$
  if and only if the set-valued mapping $F$ defined in~\labelcref{eq:constraint-mapping} is
  metrically regular at $(x,(\mu,\nu))$.
  \label{thm:MFCQ-metric-regularity}
\end{theorem}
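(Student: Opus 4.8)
The plan is to characterise metric regularity of $F$ via a dual (coderivative) criterion and to identify the resulting condition with \labelcref{eq:MFCQ}. Recalling~\labelcref{eq:constraint-mapping}, write $F = \Phi + K$, where $\Phi := (g_1,\dots,g_m,h_1,\dots,h_r)\colon\R^n\to\R^{m+r}$ is $C^1$ and $K := \R^m_+\times\{0\}^r$ is a closed convex cone, and fix $\bar x\in C_\mu\cap M_\nu$ together with $\bar y := (\mu,\nu)\in F(\bar x)$. Since $\graph F$ is closed and $F$ is the sum of the smooth map $\Phi$ and the constant set-valued map $x\mapsto K$, the coderivative sum rule with a smooth term (see \cite{Mor06}) gives $D^*F(\bar x\mid\bar y)(\lambda,\kappa) = \big\{\sum_{i=1}^m\lambda_i\nabla g_i(\bar x)+\sum_{j=1}^r\kappa_j\nabla h_j(\bar x)\big\}$ when $-(\lambda,\kappa)\in N_K(\bar y-\Phi(\bar x))$, and $D^*F(\bar x\mid\bar y)(\lambda,\kappa)=\emptyset$ otherwise. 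By the Mordukhovich criterion for metric regularity (see \cite{RW98,Mor06}), $F$ is then metrically regular at $(\bar x,\bar y)$ if and only if $0\in D^*F(\bar x\mid\bar y)(\lambda,\kappa)$ implies $(\lambda,\kappa)=0$.

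Next I would make this last condition explicit. As $\bar x\in C_\mu\cap M_\nu$ one has $\bar y-\Phi(\bar x)=(\mu-g(\bar x),0)$ with $\mu_i-g_i(\bar x)\geq 0$, so $N_K(\bar y-\Phi(\bar x))$ consists of those $(p,q)\in\R^m\times\R^r$ with $p_i=0$ for $i\notin I$, $p_i\leq 0$ for $i\in I$, and $q$ arbitrary, where $I:=\{i\mid g_i(\bar x)=\mu_i\}$ is the set of constraints active at $\bar x$. Hence the criterion reads: if $\lambda\in\R^m$ satisfies $\lambda_i=0$ for $i\notin I$ and $\lambda_i\geq 0$ for $i\in I$, if $\kappa\in\R^r$, and if $\sum_i\lambda_i\nabla g_i(\bar x)+\sum_j\kappa_j\nabla h_j(\bar x)=0$, then $\lambda=0$ and $\kappa=0$. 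In particular the inactive inequalities have dropped out by themselves, exactly as in \labelcref{eq:MFCQ}.

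It remains to match this with the Mangasarian-Fromovitz condition. Taking $\lambda=0$ shows the criterion forces $\nabla h_1(\bar x),\dots,\nabla h_r(\bar x)$ to be linearly independent, which is the first requirement of MFCQ; granting this, what is left of the criterion is that there is no nonzero family $(\lambda_i)_{i\in I}$ of nonnegative reals with $\sum_{i\in I}\lambda_i\nabla g_i(\bar x)\in\spn\{\nabla h_j(\bar x)\mid 1\leq j\leq r\}$. By homogeneity this is precisely $\co\{\nabla g_i(\bar x)\mid i\in I\}\cap\spn\{\nabla h_j(\bar x)\mid 1\leq j\leq r\}=\emptyset$, which by \cref{rmk:Hahn-Banach} is exactly the solvability of \labelcref{eq:MFCQ}; when there is no equality constraint one uses instead the characterisation $0\notin\co\{\nabla g_i(\bar x)\mid i\in I\}$ from the same remark. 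Putting the two steps together gives the equivalence.

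The step I expect to require the most care is the equality block. One must keep track at the same time of the fact that the factor $\{0\}^r$ of $K$ contributes the whole of $\R^r$ to $N_K$ --- which is precisely what produces the linear independence requirement --- and of the fact that the gradients $\nabla h_j(\bar x)$ reappear in the relation $\sum_i\lambda_i\nabla g_i(\bar x)+\sum_j\kappa_j\nabla h_j(\bar x)=0$, so that the passage from that relation to the geometric statement $\co\cap\spn=\emptyset$ rests on a separation argument performed on the face $\R^I$. An alternative route, which avoids coderivative calculus entirely, is to invoke Robinson's stability theorem --- metric regularity of $x\mapsto\Phi(x)-((\mu,\nu)-K)$ at $(\bar x,0)$ is equivalent to Robinson's constraint qualification $0\in\intr\big(\Phi(\bar x)+\nabla\Phi(\bar x)\R^n-((\mu,\nu)-K)\big)$ --- and then to check directly, again via \cref{rmk:Hahn-Banach}, that this interior condition unwinds to \labelcref{eq:MFCQ}; this is in the spirit of \cite[Th.~4.1]{DQZ06}.
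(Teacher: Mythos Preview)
The paper does not give its own proof of this theorem: it states Robinson's result and immediately refers the reader to \cite{Mor06} for ``a thorough discussion and various proofs'', listing \cite[Ex.~9.44]{RW98}, \cite[Ex.~4F.3]{DR14} and \cite[Th.~4.1]{DQZ06} as alternative approaches, the last of which ``avoids the use of coderivative calculus''. So there is nothing to compare against in the paper itself.

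Your sketch is correct and is precisely the coderivative route implicit in the reference to \cite{Mor06} and \cite{RW98}: write $F=\Phi+K$, compute $D^*F$ via the smooth-plus-constant sum rule, apply the Mordukhovich criterion $\ker D^*F(\bar x\mid\bar y)=\{0\}$, and identify the result with MFCQ through \cref{rmk:Hahn-Banach}. The normal-cone and sign bookkeeping is right, and the two-step unwinding (first $\lambda=0$ to get linear independence of the $\nabla h_j$, then homogeneity to reduce to the $\co\cap\spn=\emptyset$ condition) is the standard way to close the argument. Your final paragraph, noting Robinson's stability/interior condition as an alternative that bypasses coderivatives, exactly matches the paper's pointer to \cite[Th.~4.1]{DQZ06}. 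In short: nothing to fix; you have reconstructed one of the cited proofs.
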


\subsection{Genericity of regular perturbations}

Qualification conditions play an important role in the analysis of nonlinear programming
and the convergence of optimization algorithms, yet checking these conditions at
optimal points is hardly possible.
This is why one rather seeks local/global simple geometrical assumptions that
automatically warrant these conditions.
Sard's theorem provides results in this direction: generic equations are well-posed if
the data are smooth enough or well-structured (e.g., analytic).
Viewing constraint sets along this angle and following the pioneering work \cite{RS79},
we establish here various genericity results for regular perturbations.

\subsubsection*{Smooth constraint functions}

The first genericity result we present here concerns {\em linear independence
constraint qualification}, a strong and quite stringent qualification condition
which is often considered in the literature when dealing with ``generic'' instances of
optimization problems (see e.g., \cite{Nie14,LP16}).
This qualification condition requires that the gradients of both the equality constraints
and the active inequality constraints are linearly independent.
Note that it implies in particular MFCQ.

Let us first recall classical Sard's theorem.
For a differentiable map $f: \R^p \to \R^q$, a point $x \in \R^p$ is {\em critical}
if the differential mapping of $f$ at $x$ is not surjective.
A {\em critical value} of $f$ is the image of a critical point. Otherwise, $v$ is said to be {\em regular}.

\begin{sard}[see \cite{Milnor65}]
  \label{thm:Sard}
  Let $f: \R^p \to \R^q$ be a map of class $\mathcal{C}^k$ with $k > \max(0,p-q)$.
  Then the Lebesgue measure of the set of critical values of $f$ is zero.
\end{sard}
As a consequence of Sard's theorem, we deduce that a perturbation of the constraint set
of \cref{pb:NLP} is almost surely regular when the constraint functions are
smooth enough.
\begin{theorem}[{compare with \cite[Th.~1]{RS79}}]
  \label{thm:genericity-smooth}
  Let $g_1,\dots,g_m,h_1,\dots,h_r$ be $\mathcal{C}^k$ constraint functions
  from $\R^n$ to $\R$ with $k > \max(0,n-r)$.
  Then the set of admissible perturbations $(\mu,\nu) \in \R^m \times \R^r$ for which
  the linear independence constraint qualification is not satisfied at every point
  of the set $C_\mu \cap M_\nu$ has Lebesgue measure zero.
  In particular, the set $\Sing$ of singular perturbations has Lebesgue measure zero.
\end{theorem}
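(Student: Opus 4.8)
The plan is to realize the bad perturbations as the critical values of a single smooth map built from the constraint functions restricted to the relevant strata of active sets, and then apply Sard's Theorem stratum by stratum. First I would fix a subset $I \subset \{1,\dots,m\}$ of potentially active inequality constraints, and consider the map $\Phi_I : \R^n \to \R^{|I|} \times \R^r$ defined by $\Phi_I(x) = \big( (g_i(x))_{i \in I}, (h_j(x))_{j=1}^r \big)$. The key observation is that for a point $x \in C_\mu \cap M_\nu$ with active inequality set exactly $I(x) = I$, linear independence constraint qualification at $x$ is precisely the statement that the differential $D\Phi_I(x)$ is surjective, i.e. that $x$ is a regular point of $\Phi_I$ and $((\mu_i)_{i\in I},\nu)$ is a regular value of $\Phi_I$. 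So if LICQ fails at some such $x$, then the truncated perturbation $((\mu_i)_{i \in I}, \nu)$ is a critical value of $\Phi_I$.

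Next I would invoke Sard's Theorem: since $\Phi_I$ is of class $\mathcal{C}^k$ with $k > \max(0, n - (|I|+r)) $ — which follows from the hypothesis $k > \max(0,n-r)$ because $|I| \geq 0$ — the set of critical values of $\Phi_I$ has Lebesgue measure zero in $\R^{|I|+r}$. Denote this null set by $Z_I \subset \R^{|I|}\times\R^r$. Then I would take the preimage under the coordinate projection $\pi_I : \R^m \times \R^r \to \R^{|I|}\times\R^r$ that keeps the coordinates indexed by $I$ together with all of the $\nu$-coordinates: the set $\pi_I^{-1}(Z_I)$ is a null set in $\R^m \times \R^r$ (a null set times $\R^{m-|I|}$, up to reordering coordinates, by Fubini). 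Finally, the set of admissible perturbations $(\mu,\nu)$ for which LICQ fails somewhere on $C_\mu \cap M_\nu$ is contained in the finite union $\bigcup_{I \subset \{1,\dots,m\}} \pi_I^{-1}(Z_I)$: indeed if LICQ fails at some $x \in C_\mu \cap M_\nu$, take $I = I(x)$, note $g_i(x) = \mu_i$ for $i \in I$ and $h_j(x) = \nu_j$ for all $j$, so $((\mu_i)_{i\in I},\nu) \in Z_I$ and hence $(\mu,\nu) \in \pi_I^{-1}(Z_I)$. A finite union of null sets is null, which gives the first claim. The ``in particular'' then follows because LICQ implies MFCQ, so $\Sing$ is contained in the same null set.

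The main technical point to handle carefully is matching the smoothness hypothesis: Sard's Theorem applied to $\Phi_I$ requires $k > \max(0, n - (|I|+r))$, and this must hold for every subset $I$, including $I = \emptyset$, where it reads $k > \max(0, n-r)$ — which is exactly the hypothesis assumed. Since $n - (|I|+r) \leq n - r$ for all $I$, the stated bound $k > \max(0,n-r)$ suffices uniformly, so there is no real obstacle here, just bookkeeping. A second point worth a sentence is the Fubini/reordering argument showing $\pi_I^{-1}(Z_I)$ is null in $\R^m\times\R^r$: writing coordinates of $\R^m$ as those in $I$ followed by those not in $I$, the set $\pi_I^{-1}(Z_I)$ equals $Z_I \times \R^{m-|I|}$ after this permutation, and the product of a Lebesgue-null set with a Euclidean space is Lebesgue-null. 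Everything else is immediate from the definitions.
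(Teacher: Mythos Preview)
Your argument is correct and is precisely the standard stratification-by-active-set reduction to Sard's theorem that the paper has in mind: the paper does not spell out a proof but simply presents the theorem ``as a consequence of Sard's theorem,'' and your decomposition over subsets $I\subset\{1,\dots,m\}$ together with the Fubini lift $\pi_I^{-1}(Z_I)$ is exactly how one makes that consequence explicit. Your bookkeeping on the smoothness index (the worst case being $I=\emptyset$, giving $k>\max(0,n-r)$) is also the right way to see why the stated hypothesis is the sharp one.
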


\subsubsection*{Definable constraint functions}

The above result can be considerably relaxed by replacing smoothness assumptions by mere definability.
The results on definability and tame geometry that we use hereafter are recalled in \cref{sec:tame-geometry}.

Ioffe showed a nonsmooth version of Sard's theorem for definable set-valued mappings.
In this framework, a vector $\bar{y} \in \R^q$ is a {\em critical value} of
any set-valued mapping $F: \R^p \toto \R^q$ if there exists a point $\bar{x} \in \R^p$
such that $\bar{y} \in F(\bar{x})$ and $F$ is not metrically regular at $(\bar{x},\bar{y})$.

\begin{nonsmooth-sard}[{\cite[Th.~1]{Iof07}}]
  Let $F:\R^p \toto \R^q$ be a definable set-valued mapping with locally closed graph.
  Then the set of critical values of $F$ is a definable set in $\R^q$
  whose dimension is less than $q-1$.
  \label{thm:Sard-definable}
\end{nonsmooth-sard}

Combining this result with \cref{thm:MFCQ-metric-regularity}, we readily get
a geometric description of regular perturbations for \cref{pb:NLP}
when the constraint functions are definable in the same o-minimal structure.
Let us mention that we also use the fact that any definable set $A \subset \R^p$ can be
``stratified'', that is, written as a finite disjoint union of smooth submanifolds of $\R^p$
that fit together in a ``regular'' manner.
This implies in particular that the dimension of $A$, i.e., the largest dimension of such
submanifolds, is strictly lower than $p$ if and only if the complement of $A$ is dense.

\begin{theorem}[Genericity of regular perturbations]
  \label{thm:genericity-definable}
  Let $g_1,\dots,g_m,h_1,\dots,h_r: \R^n \to \R$ be constraint functions that are
  definable in the same o-minimal structure.
  Then the set $\Reg$ (resp., $\Sing$) of regular (resp., singular) perturbations is definable
  in $\R^{m+r}$, and $\Sing$ is a finite union of smooth submanifolds of $\R^{m+r}$ of
  dimension strictly lower than $m+r$.
\end{theorem}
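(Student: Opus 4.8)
The whole statement follows by assembling Robinson's characterisation (\cref{thm:MFCQ-metric-regularity}), Ioffe's nonsmooth Sard theorem (\cref{thm:Sard-definable}), and the stratification of definable sets recalled just above. First I would record that the set-valued mapping $F\colon \R^n \toto \R^{m+r}$ from \labelcref{eq:constraint-mapping} is definable: since all the $g_i$ and $h_j$ belong to the same o-minimal structure, its graph
\[
  \graph F = \{(x,\mu,\nu) \mid g_i(x)\le \mu_i \ (1\le i\le m),\ h_j(x)=\nu_j \ (1\le j \le r)\}
\]
is a definable subset of $\R^n\times\R^{m+r}$, and it is closed (in particular locally closed) by continuity of the constraint functions, as already noted after \labelcref{eq:constraint-mapping}. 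Consequently the admissible set $\A$, which is exactly the image of $\graph F$ under the projection onto the last $m+r$ coordinates, is definable in $\R^{m+r}$.

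Next I would identify $\Sing$ with the set of critical values of $F$ in Ioffe's sense. Indeed, $(\mu,\nu)\in\A$ is singular iff there is some $x\in C_\mu\cap M_\nu$ at which MFCQ fails; using the equivalence $(\mu,\nu)\in F(x)\iff x\in C_\mu\cap M_\nu$ together with \cref{thm:MFCQ-metric-regularity}, this is the same as the existence of $x$ with $(\mu,\nu)\in F(x)$ at which $F$ is not metrically regular, i.e. $(\mu,\nu)$ is a critical value of $F$. Note that any critical value automatically lies in $\A$, so no separate intersection with $\A$ is needed. Applying \cref{thm:Sard-definable} to $F$ then yields that $\Sing$ is a definable subset of $\R^{m+r}$ with $\dim \Sing \le (m+r)-1 < m+r$. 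Since $\Reg = \A\setminus\Sing$ and both $\A$ and $\Sing$ are definable, $\Reg$ is definable as well.

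Finally, for the manifold description I would invoke the stratification of definable sets: $\Sing$, being definable, can be written as a finite disjoint union of smooth submanifolds of $\R^{m+r}$, and each of these strata has dimension at most $\dim\Sing$, hence strictly less than $m+r$. This gives the last assertion. The argument is essentially a combination of three quoted theorems, so there is no genuine analytic obstacle; the only points deserving care are checking that the hypotheses of \cref{thm:Sard-definable} really hold for $F$ (definability and local closedness of $\graph F$) and the bookkeeping showing that the ``critical value'' notion coincides exactly with ``singular perturbation'' — both immediate once Robinson's theorem is in hand.
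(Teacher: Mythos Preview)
Your proposal is correct and follows exactly the route the paper indicates: it combines Robinson's characterisation (\cref{thm:MFCQ-metric-regularity}) with Ioffe's nonsmooth Sard theorem (\cref{thm:Sard-definable}) to identify $\Sing$ with the critical values of the definable mapping $F$ from \labelcref{eq:constraint-mapping}, and then invokes stratification for the manifold decomposition. The paper itself does not spell out a proof beyond the sentence preceding the theorem, so your write-up is in fact more detailed than what appears there.
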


\begin{remark}
  \label{rmk:topology-singularities}
  Note that, in general, the set $\Sing$ of singular perturbations is not closed.
  Consider for instance the semi-algebraic functions defined on $\R^2$ by
  \begin{gather*}
    g_1(x_1, x_2) = \min \left\{ 2 x_1-1, \frac{1}{|x_1|} \right\} - x_2 , \\
    h_1(x_1, x_2)  = \frac{x_1}{1+(x_1)^2} - x_2 , \quad
    h_2(x_1, x_2)  = \frac{x_1}{1+(x_1)^2} + x_2 .
  \end{gather*}
  For every $\nu \in (0,\frac{1}{2})$, the set $[h_1=\nu, h_2=\nu]$ contains two distinct points,
  namely $(\frac{1 \pm \sqrt{1 - 4 \nu^2}}{2 \nu}, 0)$, whereas $[h_1=0, h_2=0] = \{(0,0)\}$.
  Let $\mu_\nu = \frac{1 + \sqrt{1 - 4 \nu^2}}{2 \nu}$.
  One easily checks that, for the constraint set $[g_1 \leq \mu_\nu^{-1}, h_1 = \nu, h_2 = \nu]$
  with $\nu \in (0,\frac{1}{2})$, MFCQ fails at point $(\mu_\nu, 0)$ but it is satisfied at point
  $(\frac{1 - \sqrt{1 - 4 \nu^2}}{2 \nu}, 0)$, where the inequality constraint is not active.
  Hence $(\mu_\nu^{-1}, \nu, \nu)$ is a singular perturbation for all $\nu \in (0,\frac{1}{2})$.
  However, when $\nu = 0$ the singularity disappears and only remains the point $(0,0)$
  at which MFCQ holds.
  In other words, $(0,0,0)$ is regular.
\end{remark}

\subsection{Continuity properties of perturbations}

We investigate below the continuity properties of the perturbed constraint sets and
of the value function of \cref{pb:NLP}.
Recall beforehand that given any set-valued mapping
$F: \R^p \toto \R^q$, the {\em outer limit}, $\limsup_{x \to \bar{x}} F(x) \subset \R^q$,
and the {\em inner limit}, $\liminf_{x \to \bar{x}} F(x) \subset \R^q$, of $F$
at any point $\bar{x} \in \R^p$ are defined respectively by the following:
\begin{align*}
  y \in \limsup_{x \to \bar{x}} F(x) & \iff 
  \exists \, x_n \to \bar{x} , \quad \exists \, y_n \to y , \quad
  \forall \, n \in \N , \quad y_n \in F(x_n) , \\
  y \in \liminf_{x \to \bar{x}} F(x) & \iff 
  \forall \, x_n \to \bar{x} , \quad \exists \, y_n \to y , \quad
  \exists \, n_0 \in \N , \quad \forall \, n \geq n_0 ,
  \quad y_n \in F(x_n) .
\end{align*}
Then we can define the notion of (semi)continuity for set-valued mapping.

\begin{definition}[Semicontinuity of set-valued mappings]
  A set-valued mapping $F: \R^p \toto \R^q$ is {\em outer semicontinuous} (resp.,
  {\em inner semicontinuous}) at $\bar{x} \in \R^p$ if
  \[
    \limsup_{x \to \bar{x}} F(x) \subset F(\bar{x}) \quad \Big( \text{resp.,} \enspace
    \liminf_{x \to \bar{x}} F(x) \supset F(\bar{x}) \Big)  .
  \]
  It is {\em continuous} at $\bar{x}$ if it is both outer and inner semicontinuous.
\end{definition}

A straightforward application of these definitions leads to the  elementary lemma:

\begin{lemma}[Continuity of perturbed sets]
  \label{lem:continuity-constraint-set}
  Let $g_1,\dots,g_m: \R^n \to \R$ be continuous functions.
  Assume that the constraint set $C_0 = [g_1 \leq 0,\dots, g_m \leq 0]$ is nonempty.
  Then the set-valued mapping $\R^m_+ \toto \R^n, \; \mu \mapsto C_\mu$ is continuous at $0$.
  \hfill \qed
\end{lemma}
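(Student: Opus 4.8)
The plan is to prove $\mu \mapsto C_\mu$ is continuous at $0$ by verifying the two inclusions in the definition of continuity separately, working directly from the limit characterizations.

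For \emph{outer semicontinuity}, I would take a sequence $\mu_k \to 0$ in $\R^m_+$ and points $x_k \in C_{\mu_k}$ with $x_k \to x$, and show $x \in C_0$. By definition $g_i(x_k) \leq (\mu_k)_i$ for each $i$; passing to the limit and using continuity of each $g_i$ together with $(\mu_k)_i \to 0$ gives $g_i(x) \leq 0$, hence $x \in C_0$. This direction needs no hypothesis beyond continuity of the $g_i$ and is essentially automatic; note it does not even use $C_0 \neq \emptyset$.

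For \emph{inner semicontinuity}, I would fix $x \in C_0$ and any sequence $\mu_k \to 0$ in $\R^m_+$, and produce $x_k \to x$ with $x_k \in C_{\mu_k}$ eventually. Here the simplest choice works: take $x_k = x$ for all $k$. Since $x \in C_0$ means $g_i(x) \leq 0 \leq (\mu_k)_i$ (using $\mu_k \in \R^m_+$), we have $x \in C_{\mu_k}$ for \emph{every} $k$, so the constant sequence does the job with $n_0 = 1$. Thus $\liminf_{\mu \to 0} C_\mu \supset C_0$. The role of the nonemptiness assumption $C_0 \neq \emptyset$ is only to make the statement non-vacuous (and to match the intended use of the lemma); the argument itself is trivial for each individual $x \in C_0$.

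There is essentially no main obstacle here — the lemma is elementary, as the authors themselves indicate with the ``\hfill\qed''. The only mild subtlety worth a remark is that continuity at $0$ is special precisely because the perturbation parameter lives in $\R^m_+$: for a general $\mu \in \R^m$ the constant-sequence trick for inner semicontinuity fails (one would need to move points into the slightly smaller or differently-shaped set $C_{\mu_k}$), which is why genuine qualification conditions enter in the finer results; but at the basepoint $0$, monotonicity of the sublevel-set family in the direction of $\R^m_+$ makes both inclusions immediate. I would therefore present the proof in two short paragraphs mirroring the two inclusions above, and leave it at that.
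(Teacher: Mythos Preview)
Your proof is correct and is exactly what the paper has in mind: the authors present the lemma with an immediate \qed\ and preface it by ``A straightforward application of these definitions leads to the elementary lemma,'' so there is no written proof to compare against beyond the expectation that one checks outer and inner semicontinuity directly from the definitions. Your two-paragraph verification does precisely this, and your remark that the key point is the monotonicity $C_0 \subset C_\mu$ for $\mu \in \R^m_+$ (which makes the constant sequence work for inner semicontinuity) is consistent with the paper's subsequent observation that inner semicontinuity fails for general $\mu \in \R^m$.
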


\begin{remark}
  It is in general necessary to consider nonnegative perturbations
  in order to have continuity at $0$.
  Indeed, for general perturbations, although the inequality constraint set mapping
  $\R^m \toto \R^n, \; \mu \mapsto C_\mu$ is outer semicontinuous at $0$ (this readily
  follows from the continuity of the constraint functions), it is not inner semicontinuous.
  Consider for instance the following constraint set, defined for any $\mu \in \R^2$ by
  \[
    C_\mu = \{x \in \R \mid 1-x^2 \leq \mu_1, \; (x+1)^2 - 4 \leq \mu_2 \} .
  \]
  Check that $C_0 = [-3,-1] \cup \{1\}$.
  However, for all $\mu_1 < 0$ and $\mu_2 < 0$ small enough, we have
  $C_\mu = [-1-\sqrt{4+\mu_2}, -\sqrt{1-\mu_1}] \subset [-3, -1]$.
  Hence $\{1\}$ cannot be in the inner limit of $C_\mu$ as $\mu \to 0$.
  Precisely, we have $\liminf_{\mu \to 0} C_\mu = [-3,-1]$.

  As for the equality constraint set mapping $\R^r \toto \R^n, \; \nu \mapsto M_\nu$,
  it is also clearly outer semicontinuous at $0$ 
  but not inner semicontinuous in general, even when restricting to $\R_+^r$.
  For instance, consider for $\nu \in \R$ the constraint set
  \[
    M_\nu = \{x \in \R \mid 9 x (x^2-1) - 2 \sqrt{3} = \nu \}
  \]
  and check that $M_0 = \{-1 / \sqrt{3}, \; 2/ \sqrt{3}\}$.
  However, for every $\nu > 0$, $M_\nu$ contains a unique point, which converges to $2 / \sqrt{3}$
  as $\nu$ tends to $0$.
  As a consequence, the inner limit of $M_\nu$ at $0$ can only contain this point.
  Studying the situation when $\nu < 0$, one readily see that, actually,
  $\liminf_{\nu \to 0} M_\nu = \{2 / \sqrt{3}\}$.
\end{remark}

We now turn our attention to the behavior of the value function of perturbed problems and we study
 the continuity at $0$ of 
$(\mu,\nu) \mapsto \min \{ f(x) \mid  x \in C_\mu \cap M_\nu \}$.
As a consequence of previous observations: continuity cannot occur in general
when equality constraints are present, and it is  ``necessary" to consider nonnegative perturbations
for the inequalities.
The next result is a classical, see e.g., \cite[Prop.\ 4.4]{BS00}.
In the following, we denote by $\R_{++}^m$  the set of vectors in $\R^m$ with positive entries.

\begin{lemma}[Continuity of the value function]
  \label{lem:continuity-value}
  Let $f, g_1,\dots,g_m: \R^n \to \R$ be continuous functions.
  Assume that the constraint set $C_\mu = [g_1 \leq \mu_1,\dots, g_m \leq \mu_m]$
  is nonempty for $\mu = 0$ and bounded for some positive perturbation $\mu' \in \R_{++}^m$.
  Then the value function $\val: \R_+^m \to \R$ defined by
  $\displaystyle \val(\mu) = \min_{x \in C_\mu} f(x)$
  is continuous at $0$:
  \[
    \min_{x \in C_\mu} f(x) \enspace
    \xrightarrow[\substack{\mu \to 0 \\ \mu \in \R_+^m}]{} 
    \enspace \min_{x \in C_0} f(x) \enspace .
  \]
\end{lemma}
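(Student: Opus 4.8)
The plan is to prove continuity at $0$ by combining two one-sided estimates, after first checking that $\val(\mu)$ is a genuine minimum for every $\mu$ near $0$ in $\R_+^m$. Fix the positive perturbation $\mu' \in \R_{++}^m$ for which $C_{\mu'}$ is bounded. For any $\mu \in \R_+^m$ with $\mu \leq \mu'$ componentwise, monotonicity of sublevel sets gives $\emptyset \neq C_0 \subseteq C_\mu \subseteq C_{\mu'}$, and since $C_{\mu'}$ is closed (continuity of the $g_i$) and bounded, it is compact; hence $C_\mu$ is compact as well, and the continuous function $f$ attains its minimum on it. So $\val(\mu) = \min_{C_\mu} f$ is well defined on $\{0 \leq \mu \leq \mu'\}$, and all the feasible sets in play, as well as the minimizers I will extract below, live inside the single compact set $C_{\mu'}$.

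The upper estimate is then immediate: from $C_0 \subseteq C_\mu$ one gets $\val(\mu) \leq \val(0)$ for all $\mu \in \R_+^m$ with $\mu \leq \mu'$, whence $\limsup_{\mu \to 0} \val(\mu) \leq \val(0)$. (Alternatively this could be read off from the inner semicontinuity of $\mu \mapsto C_\mu$ at $0$ established in \cref{lem:continuity-constraint-set}, but the monotonicity argument is more direct.)

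For the reverse inequality I would argue sequentially. Take $\mu_k \to 0$ with $\mu_k \in \R_+^m$; discarding finitely many indices we may assume $\mu_k \leq \mu'$, and along a subsequence we may assume $\val(\mu_k) \to \ell := \liminf_{\mu \to 0} \val(\mu)$. Choose minimizers $x_k \in C_{\mu_k} \subseteq C_{\mu'}$ with $f(x_k) = \val(\mu_k)$. By compactness of $C_{\mu'}$, a further subsequence satisfies $x_k \to \bar x \in C_{\mu'}$. Continuity of each $g_i$ gives $g_i(\bar x) = \lim_k g_i(x_k) \leq \lim_k (\mu_k)_i = 0$, so $\bar x \in C_0$, while continuity of $f$ gives $f(\bar x) = \lim_k f(x_k) = \ell$. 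Hence $\val(0) \leq f(\bar x) = \ell$, that is, $\liminf_{\mu \to 0} \val(\mu) \geq \val(0)$. Combining the two estimates yields $\val(\mu) \to \val(0)$ as $\mu \to 0$ in $\R_+^m$.

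The argument is elementary and I do not expect a genuine obstacle; the only points requiring care are organizational. One must use the boundedness of $C_{\mu'}$ rather than of $C_0$, together with the inclusion $C_\mu \subseteq C_{\mu'}$, to obtain a fixed compact set containing all relevant feasible sets and all the minimizers $x_k$; and one must stay within $\mu \geq 0$ so that $C_0 \subseteq C_\mu$ (needed for the upper estimate) and so that no feasible point of $C_0$ is lost in the limit — the discussion preceding this statement already flags why nonnegativity of the perturbation is essential here.
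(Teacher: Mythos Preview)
Your proof is correct and follows essentially the same approach as the paper: the monotonicity $C_0\subseteq C_\mu$ gives the upper estimate, and extracting minimizers inside the fixed compact set $C_{\mu'}$ yields the lower estimate via continuity. Your write-up is slightly more explicit about why the minimum is actually attained, but the argument is the same.
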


\begin{proof}
  First, since $C_0 \subset C_\mu$ for all $\mu \in \R_+^m$, we have
  $\val(0) \geq \limsup_{\mu \to 0} \val(\mu)$.

  Let $(\mu_n)_{n \in \N}$ be any sequence in $\R_+^m$ converging to $0$ and
  such that $\val(\mu_n)$ converges to some $v \in \R \cup \{-\infty\}$.
  Since $\mu' \in \R_{++}^m$, we may assume without loss of generality that we have,
  for all integers $n$, $\mu' - \mu_n \in \R_+^m$, so that $C_{\mu_n} \subset C_{\mu'}$.
  Let $x^*_n \in \argmin \{ f(x) \mid  x \in C_{\mu_n} \}$, that is, $x^*_n \in C_{\mu_n}$
  and $f(x^*_n) = \val(\mu_n)$.
  Since the sequence $(x^*_n)_{n \in \N}$ lies in the bounded set $C_{\mu'}$, it converges,
  up to an extraction, to some point $x^*$.
  Therefore, we have $v = f(x^*)$ with $x^* \in C_0$ by continuity of $f$ and
  $\mu \mapsto C_\mu$ (\cref{lem:continuity-constraint-set}).
  We deduce that $\val(0) \leq \liminf_{\mu \to 0} \val(\mu)$,
  which concludes the proof.
\end{proof}

Note that, without the compactness assumption, the conclusion of
\cref{lem:continuity-value} does not hold.
Consider for instance the semi-algebraic programming problem
\[
  \text{minimize} \enspace \frac{1+x^2}{1+x^4} \quad
  \text{subject to} \enspace x \in \R , \enspace \frac{x^2}{1+x^4} \leq \prtb .
\]
For all scalars $\prtb > 0$, the value of the problem is $\val(\prtb) = 0$,
whereas for $\prtb = 0$ it is $\val(0) = 1$.

\section{Finiteness of singular diagonal perturbations}
\label{sec:finite-singularities}

\subsection{Geometric aspects of regular perturbations}

Although \cref{thm:genericity-definable} is a satisfying theoretical result,
it does not give any structural information beyond dimension and definability.
In particular it is not clear {\em how the perturbations} should be chosen 
when dealing with concrete optimization problems. 
The following result shows that, under reasonable assumptions, {\em small positive} and
{\em small negative} perturbations $\mu$ of the inequality constraints are always regular,
that is, MFCQ is satisfied at every point in $C_{\mu}$.
For the sake of simplicity, we have chosen to state this result
as well as all the subsequent ones for constraint sets defined only by inequalities.
Nevertheless, they all extend easily to the setting of inequality and equality constraints
(with perturbations applying only to the inequalities), see
\cref{rmk:regular-perturbations}~\labelcref{it:equality-constraints-i} and
\cref{rmk:equality-constraints-ii,rmk:equality-constraints-iii,rmk:equality-constraints-iv,rmk:equality-constraints-v}.

\begin{theorem}[Small regular perturbations]
  \label{thm:regular-local-perturb}
  Let $g_1,\dots$, $g_m: \R^n \to \R$ be differentiable constraint functions that are definable
  in the same o-minimal structure.
  \begin{enumerate}
    \item {\em (Outer regular perturbations).}
      If $C_0 = [g_1 \leq 0, \dots, g_m \leq 0]$ is nonempty, then there exists
      $\varepsilon_0 > 0$ such that $(0,\varepsilon_0)^m \subset \Reg$.
      In other words, for all positive perturbations $\mu \in (0,\varepsilon_0)^m$,
      the Mangasarian-Fromovitz constraint qualification holds throughout
      $C_\mu = [g_1 \leq \mu_1, \dots, g_m \leq \mu_m]$.
      \label{it:outer-perturb}
    \item {\em (Inner regular perturbations).}
      If $[g_1 < 0, \dots, g_m < 0]$ is nonempty, then there exists $\varepsilon_1 > 0$
      such that $(-\varepsilon_1,0)^m \subset \Reg$.
      In other words, for all negative perturbations $\mu \in (-\varepsilon_1,0)^m$,
      the Mangasarian-Fromovitz constraint qualification holds throughout
      $C_\mu = [g_1 \leq \mu_1, \dots, g_m \leq \mu_m]$.
      \label{it:inner-perturb}
  \end{enumerate}
\end{theorem}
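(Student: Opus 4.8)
The plan is to prove parts (i) and (ii) together, by contradiction, invoking the Curve Selection Lemma (available in tame geometry). Suppose, say for part (i), that no such $\varepsilon_0$ exists. Then for every $\varepsilon > 0$ there is a perturbation $\mu \in (0,\varepsilon)^m$ and a point $x \in C_\mu$ at which MFCQ fails; by \cref{rmk:Hahn-Banach} this means $0 \in \co\{\nabla g_i(x) \mid i \in I(x)\}$, where $I(x) = \{i \mid g_i(x) = \mu_i\}$. The set of pairs $(\mu, x)$ with $\mu \in \R_{++}^m$, $x \in C_\mu$ singular is definable, and by assumption its closure meets $\{\mu = 0\}$. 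Hence one can select a definable curve: a definable path $t \mapsto (\mu(t), x(t))$, $t \in (0,1)$, with $\mu(t) \in \R_{++}^m$, $\mu(t) \to 0$ as $t \to 0^+$, $x(t) \in C_{\mu(t)}$, and MFCQ failing at $x(t)$ for each $t$. (Here one should first note that, by definability, the active index set $I(x(t))$ and the combination coefficients can be taken constant along the curve, after shrinking the interval — this is the Monotonicity/definable-choice toolbox.)

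Fix then the common active set $I := I(x(t))$ and nonnegative coefficients $\lambda_i$, $\sum_{i \in I}\lambda_i = 1$, with $\sum_{i \in I}\lambda_i \nabla g_i(x(t)) = 0$ for all $t$. Since $i \in I(x(t))$ means $g_i(x(t)) = \mu_i(t)$, and $\mu_i(t) > 0$ for $t > 0$ while $\mu_i(t) \to 0$, each function $t \mapsto g_i(x(t))$ is a definable function that is strictly positive on $(0,\delta)$ and tends to $0$. The key computation is to differentiate $\varphi_i(t) := g_i(x(t))$ for $i \in I$: by the chain rule $\varphi_i'(t) = \langle \nabla g_i(x(t)), x'(t)\rangle$, wherever $x(\cdot)$ is differentiable (which is off a finite set, by the monotonicity lemma applied coordinatewise). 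Multiplying by $\lambda_i$ and summing over $i \in I$ gives
\[
  \sum_{i \in I}\lambda_i \,\varphi_i'(t) \;=\; \Big\langle \sum_{i \in I}\lambda_i \nabla g_i(x(t)),\; x'(t)\Big\rangle \;=\; 0 .
\]
So the definable function $\psi(t) := \sum_{i \in I}\lambda_i \varphi_i(t)$ has zero derivative off a finite set, hence is constant on $(0,\delta)$. But $\psi(t) = \sum_{i\in I}\lambda_i \mu_i(t) \to 0$ as $t \to 0^+$ and $\psi(t) > 0$ for $t > 0$ (since all $\lambda_i \geq 0$ with $\sum \lambda_i = 1 > 0$ and all $\mu_i(t) > 0$), a contradiction.

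For part (ii) the argument is symmetric: if $[g_1 < 0,\dots,g_m < 0] \neq \emptyset$, then for $\mu$ in a neighborhood of $0$ the set $C_\mu$ is nonempty (indeed a strict feasible point persists), so the same curve-selection setup applies with $\mu(t) \in (-\varepsilon,0)^m$; now $\varphi_i(t) = g_i(x(t)) = \mu_i(t) < 0$ for $t > 0$, $\mu_i(t) \to 0$, and again $\psi(t) = \sum_{i \in I}\lambda_i \mu_i(t)$ is forced to be both constant and strictly negative with limit $0$, which is impossible. The main obstacle — and the step that requires the most care — is the definable curve selection together with the stabilization of the active set and of the multiplier vector along the curve; one must make sure the point $x(t)$ can be chosen on a definable arc converging into $\{\mu = 0\}$ (which uses that the closure of the definable singular locus meets the boundary, i.e.\ a Curve Selection Lemma at a boundary point), and that the sign and limit of $\psi$ are as claimed uniformly in $t$ near $0$. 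Once the curve is in hand, the contradiction via "constant function with a strictly signed limit $0$" is immediate.
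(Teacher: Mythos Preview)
Your overall architecture matches the paper's: argue by contradiction, apply curve selection to reach a definable arc $t\mapsto(\mu(t),x(t))$ with $\mu(t)\to 0$ and MFCQ failing at $x(t)$, stabilize the active set $I$, differentiate $g_i\circ x$, and derive a contradiction. However, one step is a genuine gap.

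You assert that ``the combination coefficients can be taken constant along the curve'' and then work with a single $\lambda\in\Delta_I$ satisfying $\sum_{i\in I}\lambda_i\nabla g_i(x(t))=0$ for all $t$. The monotonicity/definable-choice toolbox does not give this: definable choice produces a definable selection $t\mapsto\lambda(t)$, and monotonicity makes each $\lambda_i(t)$ piecewise $\mathcal C^1$ and monotone, but not constant. With $\lambda$ depending on $t$, your function $\psi(t)=\sum_{i\in I}\lambda_i(t)\mu_i(t)$ has derivative
\[
\psi'(t)=\sum_{i\in I}\lambda_i'(t)\,\mu_i(t)+\sum_{i\in I}\lambda_i(t)\,\dot\mu_i(t),
\]
and only the second sum vanishes by your chain-rule computation; the first sum need not, so $\psi$ is not forced to be constant, and your contradiction collapses.

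The paper fixes this without trying to freeze $\lambda$: it keeps $\lambda_i(t)$ $t$-dependent, and instead uses the Monotonicity Lemma on each coordinate $\mu_i$ (which is definable, positive on $(0,\varepsilon)$, and vanishes at $0$) to get $\dot\mu_i(t)>0$ for small $t$. The identity $\sum_{i\in I}\lambda_i(t)\,\dot\mu_i(t)=0$ then contradicts $\lambda_i(t)\geq 0$, $\sum_i\lambda_i(t)=1$, $\dot\mu_i(t)>0$ \emph{pointwise}, with no integration needed. Replacing your ``$\psi$ constant'' step by this pointwise argument repairs the proof. (A second, more minor point: selecting $(\mu(t),x(t))$ jointly via curve selection requires the closure of the singular locus in $(\mu,x)$-space to meet $\{\mu=0\}$, which can fail if $x$ escapes to infinity; the paper avoids this by first selecting $\mu(t)$ via curve selection on the projection $\dom S$, and then lifting to $x(t)$ by definable choice.)
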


\begin{proof}
  We only show \cref{it:outer-perturb}.
  \Cref{it:inner-perturb} follows from very similar arguments.
  Let us first notice that the constraint set mapping
  $\R^m_+ \toto \R^n, \; \mu \mapsto C_\mu$ is a definable mapping. 
  For each $\mu$ in $\R^m_{++}$ we consider the subset $S(\mu)$ of $C_\mu$ consisting of
  the points at which MFCQ is not satisfied.
  Following \cref{rmk:Hahn-Banach}, we have
  \begin{equation}
    \label{eq:singular-point-map}
    S(\mu) = \Big\{ x \in C_\mu \mid 0 \in  \co \, \{\nabla g_i(x) \mid i \in I(x) \} \Big\} .
  \end{equation}
	This extends to a definable set-valued mapping $S: \R^m \toto \R^n, \; \mu \mapsto S(\mu)$
  by setting $S(\mu) = \emptyset$ if $\mu \not \in \R^m_{++}$.

  Towards a contradiction, we assume that $0$ belongs to the closure of $\dom S$.
  Using the \cref{lem:curve-selection}, we obtain a definable $\mathcal{C}^1$ curve
  $[0,1) \to \R^m, \; t \mapsto \mu(t)$ such that $\mu(t) \in \dom S$ for all $t > 0$
  and $\mu(0) = 0$.
	The \cref{lem:monotonicity} combined with the fact that
  $\mu(t) \in \dom S \subset \R^m_{++}$ for $t \in (0,1)$	and $\mu(0) = 0$,
  ensures the existence of $\varepsilon > 0$ such that 
  \begin{equation}
		\label{eq:increase-mu}
    \dot{\mu}_i(t) > 0 , \quad  i = 1,\dots,m ,
    \quad \forall \, t \in (0,\varepsilon) .
  \end{equation}
  
  The set-valued mapping $(0,\varepsilon) \toto \R^n, \; t \mapsto S(\mu(t))$ is definable
  and has nonempty values, hence the \cref{lem:definable-choice} yields the existence of
  a definable curve $x: (0,\varepsilon) \to \R^n$ such that $x(t) \in S(\mu(t))$ for all $t$.
	Shrinking $\varepsilon$ if necessary (using the \cref{lem:monotonicity}) we can assume
  that $x(\cdot)$ is $\mathcal{C}^1$.

  Being given two definable functions $a, b: (0,\varepsilon) \to \R$, we can apply
  once more \hyperref[lem:monotonicity]{Lemma~\labelcref{lem:monotonicity}}
  to see that either $a(t) = b(t)$ or $a(t) > b(t)$
  or $b(t) > a(t)$ for $t$ sufficiently small.
	This implies in particular that there exists a positive real $\varepsilon' \leq \varepsilon$ 
	and a nonempty subset $I \subset \{1,\ldots, m\}$ such that $I(x(t)) = I$
  for all $t \in (0,\varepsilon')$. 
  Indeed, recall that $I(x(t)) = \{1 \leq i \leq m \mid g_i(x(t)) = \mu_i(t) \}$,
  where each pair of functions $(g_i(x(\cdot)),\mu_i(\cdot))$, $i=1,\dots,m$, is definable. 
	Hence $I(x(t))$ stabilizes for $t > 0$ sufficiently small.
  Furthermore, for all $t \in (0, \varepsilon)$, $I(x(t))$ is nonempty because otherwise 
	MFCQ would be satisfied at $x(t)$, which would contradict the fact that $x(t) \in S(\mu(t))$.

  By definition of $S$, for all $t \in (0,\varepsilon')$ there exist coefficients $\lambda_i(t)$
  with $i \in I$ such that
  \[
    \lambda_i(t) \geq 0 , \enspace \forall i \in I , \quad \text{and} \quad
    \sum_{i \in I} \lambda_i(t) = 1 ,
  \]
  and such that 
  \begin{equation}
    \label{eq:convex-combination}
    \sum_{i \in I} \lambda_i(t) \, \nabla g_i(x(t)) = 0 .
  \end{equation}
  Multiplying each member of the above equality by $\dot{x}(t)$, one obtains
  \[
    \sum_{i \in I} \lambda_i(t) \, \< \: \dot{x}(t), \nabla g_i(x(t)) \: > = 0
  \]
  which also writes
  \[
    \sum_{i \in I} \lambda_i(t) \, \frac{d (g_i \circ x)}{dt} (t) = 0 .
  \]
  Since each inequality constraint $g_i$, $i \in I$, is active for all $t \in (0,\varepsilon')$,
  one gets
  \[
    \sum_{i \in I} \lambda_i(t) \, \dot{\mu}_i(t) = 0 , \quad
    t \in (0,\varepsilon') ,
  \]
	a contradiction: indeed \Cref{eq:increase-mu} and the fact that $I \neq \emptyset$ indicate
  that the left-hand side of the latter equality is positive for all $t \in (0,\varepsilon')$.

  For \cref{it:inner-perturb}, it suffices to notice that Slater's condition
  guarantees that the set $C_\mu$ with $\mu \in (-\infty,0)^m$ is nonempty
  for $\mu$ in a neighborhood of $0$.
  The proof follows then arguments similar to those developed
  for \cref{it:outer-perturb}.
\end{proof}

We next discuss some aspects of the hypotheses of \cref{thm:regular-local-perturb}.

\begin{remark}
  \label{rmk:regular-perturbations}
  \begin{enumerate}[label=(\alph*),leftmargin=0pt,itemindent=0.25in,labelsep=3pt]
    \item A similar result cannot be derived for joint perturbations $(\mu,\nu)$ of
      inequality and equality constraint sets.
      Consider for instance the functions defined on $\R^2$ by $g(x_1,x_2) = x_2$ and
      $h(x_1,x_2) = x_2-(x_1)^2$.
      For all perturbations $\mu \in \R$, the constraint set
      $C_\mu \cap M_\mu = [g \leq \mu, h = \mu]$ contains only the point $(0,\mu)$,
      at which MFCQ does not hold since  $\nabla g(0,\mu) = \nabla h(0,\mu) = (0,1).$
      See also \cref{rmk:topology-singularities}.
    \item The conclusion of \cref{thm:regular-local-perturb} still holds for perturbed constraint
      sets of the form $C_\mu \cap M_0$.
      For this, one needs to assume that the equality constraint set $M_0 = [h_1 = 0,\dots,h_r = 0]$
      is defined by definable differentiable functions $h_1,\dots,h_r: \R^n \to \R$ whose gradient
      vectors $\nabla h_j(x)$, $j = 1,\dots,r$, are linearly independent for all $x \in M_0$.
      
      \smallskip
      \begin{small}
        \noindent
        $\left[\right.${\em Sketch of proof.}
        Only a few changes in the proof of the previous theorem are necessary.
        The first one is the definition of the set-valued map $S$ \cref{eq:singular-point-map}, which
        sends perturbation vectors $\mu \in \R^m$ to the set of feasible points where constraint
        qualification conditions are not satisfied.
        In this new setting, it becomes
        \[
          S(\mu) = \Big\{ x \in C_\mu \cap M_0 \mid \co \, \{\nabla g_i(x) \mid i \in I(x) \} \cap 
          \spn \, \{ \nabla h_j(x) \mid 1 \leq j \leq r \} \neq \emptyset \Big\} .
        \]
        The second change is \Cref{eq:convex-combination} which characterizes the failure of
        constraint qualification at point $x(t) \in S(\mu(t))$.
        Since the gradient vectors of the equality constraints are linearly independent
        for all the feasible points, this failure of constraint qualification must come from
        the absence of a vector $y$ satisfying \cref{eq:MFCQ}.
        Hence, following \cref{rmk:Hahn-Banach}, the right-hand side of the equality must be replaced
        by a linear combination of the gradients $\nabla h_j$ at point $x(t)$, that is,
        \Cref{eq:convex-combination} now reads
        \[
          \sum_{i \in I} \lambda_i(t) \, \nabla g_i(x(t)) =
          \sum_{j=1}^r \kappa_j(t) \, \nabla h_j(x(t))
        \]
        for some coefficients $\kappa_j(t) \in \R$, $j = 1, \dots, r$.
        Then the proof proceeds along the same lines.
        In particular, multiplying the right-hand side of the previous equality by $\dot{x}(t)$,
        one obtains
        \[
          \sum_{j=1}^r \kappa_j(t) \, \< \: \dot{x}(t), \nabla h_j(x(t)) \: >
          = \sum_{j=1}^r \kappa_j(t) \, \frac{d (h_j \circ x)}{dt} (t) = 0
        \]
        since each equality constraint $h_j$, $j=1,\dots,r$, is constant on the curve $x$.$\left.\right]$
      \end{small}
      \smallskip
      \label{it:equality-constraints-i}
    \item The use of small perturbation vectors which are not positive (or negative) may
      not remove the absence of MFCQ.
      A simple example is given on $\R^2$ by
      \[
        g_1(x_1,x_2) = (x_1)^2 + (x_2)^2 - 1 \quad \text{and} \quad
        g_2(x_1,x_2) = (x_1-2)^2 + (x_2)^2 - 1 .
      \]
      The sets $[g_1 \leq 0]$ and $[g_2 \leq 0]$ delineate two tangent discs.
      Therefore, MFCQ fails at their contact point $(1,0)$.
      Consider the perturbation path $t \mapsto (\mu_1(t), \mu_2(t)) = (t^2 - 2t, t^2 + 2t)$
      which passes through $(0,0)$ with velocity $(2,-2)$.
      It can be checked that the constraint set $[g_1 \leq \mu_1(t), g_2\leq \mu_2(t)]$ is
      not regular at $(1-t,0)$ for all $t \in (-1,1)$.
    \item Even though definable functions are not the unique class of functions for which
      a theorem similar to \cref{thm:regular-local-perturb} can be derived\footnote{One can think
      for instance of continuous convex functions.}, the definability assumption
      in \cref{thm:regular-local-perturb} cannot be replaced by mere smoothness.
      Many counterexamples can be given, even when $n=1$.
      Consider for instance the strictly increasing $\mathcal{C}^{\infty}$ function
      $g(x) = \int_0^x \exp(-t^{-2}) \, \sin^2(t^{-1}) dt$
      with $x \in \R$, and the set $[g \leq 0] = (-\infty,0]$.
      Obviously the set of regular perturbations $\Reg$ does not contain any segment of
      the form $(0,\varepsilon)$ with $\varepsilon > 0$.
  \end{enumerate}
\end{remark}

We now provide a ``partial perturbation version'' of our main result, which can be proved 
following the lines of  \cref{thm:regular-local-perturb}.
It relies on the assumption that the set defined by the first $p$ inequalities
is regular.

\begin{theorem}[Partial constraint qualification]
  Let $g_1,\dots,g_m: \R^n \to \R$ be differentiable functions that are definable
  in the same o-minimal structure.
  Assume that $C_0 = [g_{1} \leq 0,\dots, g_{m} \leq 0]$ is nonempty and that
  the Mangasarian-Fromovitz constraint qualification holds throughout 
  $[g_{1} \leq 0,\dots, g_{p} \leq 0]$ for some positive integer $p < m$.
  Then there exists $\varepsilon > 0$ such that, for all perturbations
  $\mu_{p+1}, \dots, \mu_m \in (0,\varepsilon)$, MFCQ holds throughout
  $[g_1 \leq 0, \dots, g_p \leq 0, \, g_{p+1} \leq \mu_{p+1}, \dots, g_{m} \leq \mu_{m}]$.
  \hfill \qed
  \label{thm:partial-perturb}
\end{theorem}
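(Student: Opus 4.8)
The plan is to follow, almost verbatim, the proof of \cref{thm:regular-local-perturb}, the only difference being that the perturbation now lives in $\R^{m-p}$, acts solely on $g_{p+1},\dots,g_m$, and that the role played there by ``$C_0$ is nonempty'' is played here by the same hypothesis, while the assumption ``MFCQ holds throughout $[g_1\leq 0,\dots,g_p\leq 0]$'' enters as a genuinely new ingredient. Concretely, for $\mu=(\mu_{p+1},\dots,\mu_m)\in\R_{++}^{m-p}$ I would set $C_\mu=[g_1\leq 0,\dots,g_p\leq 0,g_{p+1}\leq\mu_{p+1},\dots,g_m\leq\mu_m]$, which contains $C_0$ and is therefore nonempty, and define the definable set-valued map $S\colon\R^{m-p}\toto\R^n$ by
\[
  S(\mu)=\Big\{x\in C_\mu\mid 0\in\co\,\{\nabla g_i(x)\mid i\in I(x)\}\Big\}
\]
for $\mu\in\R_{++}^{m-p}$, and $S(\mu)=\emptyset$ otherwise, where $I(x)=\{1\leq i\leq m\mid g_i(x)=\mu_i\}$ with the convention $\mu_i=0$ for $i\leq p$. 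By \cref{rmk:Hahn-Banach}, $S(\mu)$ is exactly the set of points of $C_\mu$ at which MFCQ fails, so the theorem reduces to showing that $0$ does not belong to the closure of $\dom S$.

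Arguing by contradiction, suppose $0\in\overline{\dom S}$. The \cref{lem:curve-selection} produces a definable $\mathcal{C}^1$ curve $t\mapsto\mu(t)$ on $[0,1)$ with $\mu(0)=0$ and $\mu(t)\in\dom S\subset\R_{++}^{m-p}$ for $t>0$; by the \cref{lem:monotonicity} we may shrink to an interval $(0,\varepsilon)$ on which each coordinate is strictly increasing, so $\dot\mu_i(t)>0$ for $i=p+1,\dots,m$. The \cref{lem:definable-choice} then yields a definable curve $t\mapsto x(t)\in S(\mu(t))$ on $(0,\varepsilon)$, which we may take $\mathcal{C}^1$ after another application of the \cref{lem:monotonicity}. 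Comparing pairwise, via the \cref{lem:monotonicity}, the definable functions $t\mapsto g_i(x(t))$ with the relevant level ($0$ for $i\leq p$, $\mu_i(t)$ for $i>p$), the active set $I(x(t))$ stabilizes on some $(0,\varepsilon')$ to a fixed subset $I\subset\{1,\dots,m\}$, which is nonempty because $x(t)\in S(\mu(t))$ rules out MFCQ at $x(t)$.

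The core computation — and the only step genuinely new compared with \cref{thm:regular-local-perturb} — is the following. On $(0,\varepsilon')$ write the failure of MFCQ as $\sum_{i\in I}\lambda_i(t)\,\nabla g_i(x(t))=0$ with $\lambda_i(t)\geq 0$ and $\sum_{i\in I}\lambda_i(t)=1$; pairing with $\dot x(t)$ and using that $g_i\circ x\equiv 0$ for the active unperturbed indices ($i\in I$, $i\leq p$) and $g_i\circ x=\mu_i$ for the active perturbed ones ($i\in I$, $i>p$), one obtains $\sum_{i\in I,\,i>p}\lambda_i(t)\,\dot\mu_i(t)=0$. Since $\dot\mu_i(t)>0$, this forces $\lambda_i(t)=0$ for every active index $i>p$, hence $\sum_{i\in I,\,i\leq p}\lambda_i(t)=1$; in particular $I\cap\{1,\dots,p\}\neq\emptyset$ and $0\in\co\,\{\nabla g_i(x(t))\mid i\in I\cap\{1,\dots,p\}\}$. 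But $I\cap\{1,\dots,p\}$ is precisely the active set of $x(t)$ relative to $[g_1\leq 0,\dots,g_p\leq 0]$, to which $x(t)$ belongs, so by \cref{rmk:Hahn-Banach} MFCQ fails at $x(t)$ for that set, contradicting the hypothesis. Hence $0\notin\overline{\dom S}$, and any $\varepsilon>0$ with $(0,\varepsilon)^{m-p}\cap\dom S=\emptyset$ satisfies the conclusion. I expect the only delicate point to be this last mechanism — that the strict monotonicity of the perturbations annihilates the multipliers attached to the perturbed gradients, thereby converting a failure of MFCQ for the full system into a failure for the unperturbed subsystem $[g_1\leq 0,\dots,g_p\leq 0]$; everything else is a routine transcription of the proof of \cref{thm:regular-local-perturb}.
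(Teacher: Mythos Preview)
Your proposal is correct and is exactly the argument the paper has in mind: the authors do not give a proof of \cref{thm:partial-perturb} but state that it ``can be proved following the lines of \cref{thm:regular-local-perturb},'' and your write-up does precisely this, with the one genuinely new step---that the identity $\sum_{i\in I,\,i>p}\lambda_i(t)\,\dot\mu_i(t)=0$ forces $\lambda_i(t)=0$ for the perturbed active indices, reducing to a failure of MFCQ on $[g_1\leq 0,\dots,g_p\leq 0]$---correctly identified and executed.
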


\begin{remark}
  \label{rmk:equality-constraints-ii}
  Similarly to \cref{rmk:regular-perturbations}~\labelcref{it:equality-constraints-i},
  \cref{thm:partial-perturb} also holds in the setting of fixed equality
  constraints, in addition to partially perturbed inequality constraints.
\end{remark}

A simple but important corollary is a kind of partial Slater's qualification condition. 
\begin{corollary}[Partial Slater's condition]
  Let $g_1, \dots, g_m: \R^n \to \R$ be differentiable convex functions that are definable
  in the same o-minimal structure.
  Assume that $C_0 \neq \emptyset$ and that $g_i(x_0) < 0$ for some $x_0 \in \R^n$ and all
  $i \in \{1,\dots,p\}$ where $p < m$.
  Then there exists $\varepsilon > 0$ such that, for all perturbations
  $\mu_{p+1}, \dots, \mu_m \in (0,\varepsilon)$, MFCQ holds throughout
  $[g_1 \leq 0, \dots, g_p \leq 0, \, g_{p+1} \leq \mu_{p+1}, \dots, g_{m} \leq \mu_{m}]$.
  \hfill \qed
  \label{cor:partial-slater}
\end{corollary}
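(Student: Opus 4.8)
The plan is to deduce the corollary directly from \cref{thm:partial-perturb}: all the structural hypotheses of that theorem (differentiability, definability in a common o-minimal structure, $C_0\neq\emptyset$, $p<m$) are already assumed here, so the only thing left to verify is that the convexity of $g_1,\dots,g_p$ together with the Slater-type condition $g_i(x_0)<0$ for $i\in\{1,\dots,p\}$ forces the Mangasarian--Fromovitz constraint qualification to hold at \emph{every} point of the partial constraint set $C':=[g_1\leq 0,\dots,g_p\leq 0]$. Note $C'\supseteq C_0\neq\emptyset$, so $C'$ is nonempty; once MFCQ is established throughout $C'$, \cref{thm:partial-perturb} (applied with these first $p$ constraints) immediately delivers the desired $\varepsilon>0$ and the conclusion.

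To check regularity of $C'$, fix an arbitrary $\bar x\in C'$ and take the vector $y:=x_0-\bar x$. Since there are no equality constraints, by \cref{rmk:Hahn-Banach} it suffices to show $\<y,\nabla g_i(\bar x)> < 0$ for every index $i\in\{1,\dots,p\}$ that is active at $\bar x$, i.e.\ with $g_i(\bar x)=0$ (if no such index exists, MFCQ at $\bar x$ is trivial). For such an $i$, the subgradient inequality for the differentiable convex function $g_i$ gives $g_i(x_0)\geq g_i(\bar x)+\<\nabla g_i(\bar x),x_0-\bar x>$, hence $\<\nabla g_i(\bar x),y>\leq g_i(x_0)-g_i(\bar x)=g_i(x_0)<0$. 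Thus $y$ is a common strict descent direction for all active constraints at $\bar x$, so MFCQ holds at $\bar x$; since $\bar x\in C'$ was arbitrary, MFCQ holds throughout $C'$.

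With the regularity of $C'$ in hand, \cref{thm:partial-perturb} yields $\varepsilon>0$ such that for all $\mu_{p+1},\dots,\mu_m\in(0,\varepsilon)$ the set $[g_1\leq 0,\dots,g_p\leq 0,\,g_{p+1}\leq\mu_{p+1},\dots,g_m\leq\mu_m]$ satisfies MFCQ at every feasible point, which is exactly the assertion of the corollary.

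I do not expect a genuine obstacle here; the content is essentially the classical fact that convexity plus a Slater point implies the Arrow--Hurwicz--Uzawa condition, combined with \cref{thm:partial-perturb}. The only point worth a remark is that $x_0$ is \emph{not} assumed feasible for the whole system (it need only make $g_1,\dots,g_p$ strictly negative); this is harmless because the argument verifying MFCQ on $C'$ never refers to $g_{p+1},\dots,g_m$, and the possibly nonconvex remaining constraints are handled entirely by the perturbation mechanism of \cref{thm:partial-perturb} rather than by a Slater condition. (As in \cref{rmk:equality-constraints-ii}, the same reasoning goes through verbatim if fixed equality constraints with linearly independent gradients are added.)
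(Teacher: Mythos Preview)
Your proof is correct and matches the paper's intended approach: the paper does not give an explicit proof (it marks the corollary with a terminal $\qed$), signalling that it is meant as an immediate consequence of \cref{thm:partial-perturb} once one observes that, for convex $g_1,\dots,g_p$, the Slater-type hypothesis $g_i(x_0)<0$ forces MFCQ throughout $[g_1\leq 0,\dots,g_p\leq 0]$. Your verification of this classical fact via the gradient inequality, and your subsequent invocation of \cref{thm:partial-perturb}, are exactly what is required.
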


\begin{remark}
  \label{rmk:equality-constraints-iii}
  The above result is provided without equality constraints for the sake of simplicity:
  the addition of a finite system of affine constraints\footnote{The linear independence
  assumption of \cref{rmk:regular-perturbations}~\labelcref{it:equality-constraints-i} is
  not necessary in this case.}
  is an easy task.
\end{remark}

To conclude this subsection, let us emphasize that \cref{thm:regular-local-perturb}
applies to several frameworks that are widely spread in practice, including
polynomial, semi-algebraic, real analytic and many other kind of definable constraints.

\subsection{Diagonal perturbations}

When the constraint functions are definable in the same o-minimal structure, then so is
the set of regular (resp., singular) perturbations, since it can be described by
a first-order formula (see also \cref{thm:genericity-definable}).
Thus, a direct application of \cref{thm:regular-local-perturb} leads to
the finiteness of singular perturbations along any direction, and in particular
along the {\em diagonal}, i.e., for perturbations of the form $(\prtb,\dots,\prtb)$
with $\prtb \in \R$.
With a minor abuse of notation, for $\prtb \in \R$, we use the following,
$C_\prtb := [g_1 \leq \prtb,\dots,g_m \leq \prtb]$.

\begin{corollary}
  \label{cor:diagonal-perturb}
  Let $g_1,\dots,g_m: \R^n \to \R$ be differentiable functions that are definable
  in the same o-minimal structure.
  Then, for all except finitely many perturbations $\prtb \in \R$, the Mangasarian-Fromovitz constraint
  qualification holds throughout $C_\prtb = [g_1 \leq \prtb,\dots,g_m \leq \prtb]$.
  The same conclusion holds for
  \[
    [g_1 \leq 0,\dots, g_p \leq 0, g_{p+1} \leq \prtb, \dots, g_{m} \leq \prtb]
  \]
  if MFCQ holds throughout $[g_{1} \leq 0,\dots, g_{p} \leq 0]$ for some $p < m$.
\end{corollary}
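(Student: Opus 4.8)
The plan is to turn the statement into a one-dimensional definability question and then feed it the local information supplied by \cref{thm:regular-local-perturb}. The diagonal embedding $\R \to \R^m$, $\prtb \mapsto (\prtb,\dots,\prtb)$, is definable (it is linear, hence semi-algebraic), and by \cref{thm:genericity-definable} the set $\Sing$ of singular perturbations is definable in $\R^m$; hence the pullback
\[
  D \;:=\; \{\, \prtb \in \R \mid (\prtb,\dots,\prtb) \in \Sing \,\} \;\subset\; \R
\]
is a definable subset of the line, and it is exactly the set of diagonal perturbations $\prtb$ for which $C_\prtb \neq \emptyset$ and MFCQ fails somewhere on $C_\prtb$ (when $C_\prtb = \emptyset$, MFCQ holds vacuously throughout $C_\prtb$, so such $\prtb$ need not be controlled). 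By o-minimality every definable subset of $\R$ is a finite disjoint union of points and open intervals, so I would reduce the corollary to showing that $D$ contains no nondegenerate interval, equivalently $\dim D < 1$.

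To prove this I would argue by contradiction: assume some open interval $(a,b) \subset D$ and fix $\prtb_0 \in (a,b)$. Since $\prtb_0 \in D$, MFCQ fails somewhere on $C_{\prtb_0}$, so in particular $C_{\prtb_0} \neq \emptyset$. The key move is to apply \cref{thm:regular-local-perturb}~\labelcref{it:outer-perturb} not to $g_1,\dots,g_m$ but to the shifted functions $\tilde g_i := g_i - \prtb_0$, which are still differentiable and definable in the same o-minimal structure and satisfy $[\tilde g_1 \leq 0,\dots,\tilde g_m \leq 0] = C_{\prtb_0} \neq \emptyset$. That theorem yields $\varepsilon_0 > 0$ with $(0,\varepsilon_0)^m \subset \Reg$ for the $\tilde g_i$; specializing the perturbation vector to the diagonal point $(\delta,\dots,\delta)$ with $0<\delta<\varepsilon_0$ and unshifting, MFCQ holds throughout $C_{\prtb_0+\delta}$. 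Hence $(\prtb_0,\prtb_0+\varepsilon_0)\cap D = \emptyset$, which contradicts $(\prtb_0,\min(b,\prtb_0+\varepsilon_0)) \subset (a,b) \subset D$. Therefore $D$ has empty interior, hence is finite.

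For the partial statement I would run the same scheme with the diagonal embedding replaced by $\prtb \mapsto (\prtb,\dots,\prtb) \in \R^{m-p}$ acting on the last $m-p$ coordinates while the first $p$ stay at $0$, and with $D$ replaced by the definable set $D'\subset\R$ of those $\prtb$ for which MFCQ fails at some point of $[g_1\leq 0,\dots,g_p\leq0,\,g_{p+1}\leq\prtb,\dots,g_m\leq\prtb]$, invoking \cref{thm:partial-perturb} in place of \cref{thm:regular-local-perturb}. The only thing to check is that the hypotheses of \cref{thm:partial-perturb} survive the shift $\tilde g_i := g_i - \prtb_0$ for $i>p$, $\tilde g_i := g_i$ for $i\leq p$: the set $[\tilde g_1\leq 0,\dots,\tilde g_p\leq 0]$ equals $[g_1\leq 0,\dots,g_p\leq 0]$ and so still satisfies MFCQ throughout, while $[\tilde g_1\leq 0,\dots,\tilde g_m\leq 0]=[g_1\leq 0,\dots,g_p\leq 0,\,g_{p+1}\leq\prtb_0,\dots,g_m\leq\prtb_0]$ is nonempty because $\prtb_0\in D'$. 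I do not expect a serious technical obstacle here; the one point worth underlining is conceptual, namely that \cref{thm:genericity-definable} by itself does \emph{not} give the corollary — a line can meet a definable hypersurface in infinitely many points, so the dimension bound on $\Sing$ does not preclude an interval-shaped trace on the diagonal — and it is precisely the local regularity furnished by \cref{thm:regular-local-perturb} (resp.\ \cref{thm:partial-perturb}) that rules out the diagonal being swallowed by $\Sing$ on a whole interval.
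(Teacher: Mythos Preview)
Your proof is correct and follows essentially the same route the paper sketches: pull back the definable set $\Sing$ along the (definable) diagonal embedding to get a definable subset $D\subset\R$, then use the local regularity supplied by \cref{thm:regular-local-perturb} (resp.\ \cref{thm:partial-perturb}) at a shifted base point to rule out any nondegenerate interval in $D$, forcing $D$ to be finite by o-minimality. Your explicit observation that \cref{thm:genericity-definable} alone cannot give the result---because a line may intersect a definable hypersurface in an interval---is exactly the point, and the shift $\tilde g_i=g_i-\prtb_0$ is the clean way to transport \cref{thm:regular-local-perturb} to an arbitrary $\prtb_0$; the paper leaves these details implicit.
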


\begin{remark}[Equality and inequality constraint]
  \label{rmk:equality-constraints-iv}
  Similarly to \cref{rmk:regular-perturbations}~\labelcref{it:equality-constraints-i}, the above result
  also holds for perturbed constrained sets of the form $C_\alpha \cap M_0$, when $M_0$ is defined by
  definable functions that are differentiable and whose gradients are independent at any point $x \in M_0$.
\end{remark}

\begin{remark}[Diagonal perturbations through nonsmooth Sard's theorem]
  With a slightly stronger regularity assumption, \cref{cor:diagonal-perturb}
  can be seen as the nonsmooth definable Sard-type theorem \cite[Cor.~9]{BDLS07}.
  We next explain this observation.

  When dealing with diagonal perturbations $\prtb \in \R$, the constraint set
  $C_\prtb$ can be represented as the lower level set of a single
  real-extended-valued function.
  Namely, a point $x$ is in $C_\prtb$ if and only if
  \[
    \max_{1 \leq i \leq m} g_i(x) \leq \prtb .
  \]

  Let us define $g = \max_{1 \leq i \leq m} g_i$ and let us assume that the constraint
  functions $g_1,\dots,g_m$ are $\mathcal{C}^1$.
  This implies that the basic chain rule of subdifferential calculus applies to the function
  $g$, see \cite[Th.~10.6]{RW98}.
  Thus we have, for all $x \in \R^n$,
  \begin{equation*}
     \widehat{\partial} g(x) = \partial g(x) = 
    \co \, \{\nabla g_i(x) \mid 1 \leq i \leq m, \; g_i(x) = g(x) \}
  \end{equation*}
  where, for any function $f: \R^n \to \R$ and any $x \in \R^n$, $\widehat{\partial} f(x)$
  and $\partial f(x)$ denote respectively the {\em Fr\'echet subdifferential} and
  the {\em limiting/Mordukhovich subdifferential} of $f$ at $x$, see \cite{RW98} for their constructions.

  Now observe that $\prtb \in \R$ is a singular perturbation, that is, MFCQ is not
  satisfied at some point $x \in C_\prtb$, if and only if $g(x) = \prtb$
  and $0 \in \partial g( x)$.
  In other words, the singular diagonal perturbations of the inequality constraint
  set of \cref{pb:NLP} correspond to the {\em critical values}
  of $g$.

  Thus, when the functions $g_1,\dots,g_m$ are definable in the same o-minimal
  structure, so is $g$, and the finiteness of the singular diagonal perturbations
  stated in \cref{cor:diagonal-perturb} is equivalent to the finiteness of
  the critical values of the definable function $g$.
  Hence, with continuously differentiable functions, \cref{cor:diagonal-perturb}
  can be seen as a Sard-type theorem for definable functions, which was proved in full
  generality in \cite[Cor.~9]{BDLS07}.
  These arguments can also be extended when equality constraints with 
	linearly independant gradients are added to the constraint set,
  see \cref{rmk:regular-perturbations}~\labelcref{it:equality-constraints-i}.
\end{remark}

\subsection{A bound on the number  of singular perturbations for polynomial optimization}

We consider here constraint sets defined by real polynomial functions 
and we bound the number of singular values for the corresponding perturbed sets.

To tackle this problem, we evaluate the number of connected components of some adequate
real algebraic sets.
A key result regarding this evaluation is provided by Milnor-Thom's bound:
given any polynomial map $f: \R^p \to \R^q$, the number of connected components of the set
of zeros of $f$, $\{x \in \R^p \mid f(x) = 0\}$, is bounded  by
\begin{equation}
  \label{eq:Milnor-Thom}
  d \, (2d-1)^{p-1}
\end{equation}
where $d$ is the maximal degree of the polynomial functions $f_j$ for $j=1,\dots,q$,
see e.g., \cite{BLR91}.

\begin{theorem}
  \label{thm:Milnor-Thom}
  Let $g_1,\dots,g_m: \R^n \to \R$ be polynomial functions whose degree is bounded by $d$.
  Let $\{I,J\}$ be a partition of the set of indices $\{1,\dots,m\}$, possibly trivial,
  such that the Mangasarian-Fromovitz constraint qualification holds throughout
  $[g_j \leq 0,\, j \in J]$.
  Then, for the perturbed sets $[g_i \leq \prtb, g_j \leq 0, i \in I, j \in J]$
  with $\prtb$ ranging in $\R$, the number of singular perturbations is bounded by
  \[
    d \, (2d-1)^{n} \, (2d+1)^{m} .
  \]
\end{theorem}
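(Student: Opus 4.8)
The plan is to translate ``$\prtb$ is a singular perturbation'' into a system of polynomial equations on an auxiliary space, bound the number of connected components of the associated real algebraic set by Milnor--Thom, and show that distinct singular values land in distinct connected components. Fix the partition $\{I,J\}$ and write $C_\prtb = [g_i \leq \prtb,\, i\in I,\; g_j \leq 0,\, j\in J]$. By \cref{rmk:Hahn-Banach}, MFCQ fails at a feasible point $x$ precisely when $0$ lies in the convex hull of the gradients of the constraints active at $x$; since the constraints indexed by $J$ never create a singularity (MFCQ holds throughout $[g_j\le 0,\,j\in J]$, and adding these constraints as \emph{fixed} constraints preserves the structure, cf.\ the sketch in \cref{rmk:regular-perturbations}~\labelcref{it:equality-constraints-i}), at a singular point the relevant active set must involve at least one index from $I$. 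So $\prtb$ is singular iff there exist $x\in\R^n$, a nonempty $\tilde I \subset \{1,\dots,m\}$ meeting $I$, and multipliers $\lambda_k\ge 0$ with $\sum_{k\in\tilde I}\lambda_k = 1$ such that $\sum_{k\in\tilde I}\lambda_k\nabla g_k(x) = 0$, with $g_k(x) = \prtb$ for $k\in\tilde I\cap I$ and $g_k(x) = 0$ for $k\in\tilde I\cap J$, and all remaining constraints satisfied.

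The key step is the counting estimate. For each fixed choice of active set $\tilde I$ (there are at most $2^m$ of them, but one can be sharper) one considers the variables $(x,\lambda,\prtb)\in\R^n\times\R^{|\tilde I|}\times\R$ and the polynomial system consisting of the $n$ equations $\sum_k\lambda_k\nabla g_k(x)=0$, the one equation $\sum_k\lambda_k = 1$, and the $|\tilde I|$ equations expressing activity ($g_k(x)-\prtb = 0$ or $g_k(x) = 0$). The degrees here are bounded by $d$ for the activity equations, $d-1$ for the gradient equations, and $1$ for the normalization, hence all by $d$; the ambient dimension is $n + |\tilde I| + 1 \le n + m + 1$. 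Milnor--Thom \labelcref{eq:Milnor-Thom} bounds the number of connected components of the zero set of such a system by $d\,(2d-1)^{n+|\tilde I|}$. Summing over subsets $\tilde I$ and using $\sum_{\ell=0}^{m}\binom{m}{\ell}(2d-1)^{\ell} = (2d)^{m}$ — or, more carefully, $\sum_\ell\binom{m}{\ell}(2d-1)^{\ell}$ compared against $(2d+1)^m$ to absorb the normalization-variable factor — yields a total bound of the form $d\,(2d-1)^{n}\,(2d+1)^{m}$. The precise bookkeeping (which power of $(2d-1)$ versus $(2d+1)$ goes with the $\lambda$ and $\prtb$ coordinates, and whether one handles inequality side-constraints by sign-condition arguments or by the cruder ``a connected component of the equality locus on which the inequalities hold'' argument) is the routine part.

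The conceptual heart — and the main obstacle — is the injectivity claim: that each singular value $\prtb$ is ``charged'' to at least one connected component of one of these algebraic sets, and that \emph{distinct} singular values are charged to distinct components. The first half is clear from the equivalence above. For the second half one argues that $\prtb$ is a continuous (indeed polynomial) function of $x$ on each activity stratum ($\prtb = g_k(x)$ for $k\in\tilde I\cap I$), so $\prtb$ is constant on each connected component of the solution set for a fixed $\tilde I$; hence two distinct singular values arising from the same $\tilde I$ cannot share a component. Since there are only finitely many choices of $\tilde I$, summing the per-$\tilde I$ component counts gives an upper bound on the total number of singular values. I would need to be a little careful that a singular value is not undercounted because the point $x$ witnessing it requires $\tilde I\cap I=\emptyset$ — but that case is excluded by the MFCQ hypothesis on $[g_j\le 0,\,j\in J]$, exactly as in \cref{thm:partial-perturb}. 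Assembling these pieces, together with the arithmetic identity that collapses the sum over $\tilde I$, produces the stated bound $d\,(2d-1)^{n}\,(2d+1)^{m}$.
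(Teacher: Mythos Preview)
Your overall architecture matches the paper's: encode singularity via polynomial equations in $(x,\lambda,\prtb)$, bound the connected components of the resulting algebraic sets by Milnor--Thom, and inject singular values into components. But the injectivity step --- which you correctly flag as the conceptual heart --- has a real gap. You write that ``$\prtb$ is a continuous (indeed polynomial) function of $x$ \ldots\ so $\prtb$ is constant on each connected component''. Continuity on a connected set does \emph{not} imply constancy; your polynomial system does not encode the open conditions $\lambda_i>0$ and $g_\ell(x)<\prtb$ (resp.\ $<0$) for $\ell\notin\tilde I$, and nothing you have written excludes a connected component of its zero locus from carrying a whole interval of $\prtb$-values.

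The paper repairs this with two ingredients you omit. First, it works with \emph{two} index sets $K\subset L$: the full active set $L=I(x)$ and the support $K$ of the multipliers; for each singular $\prtb$ one selects a witness with $|L|$ maximal and then $|K|$ minimal. Second, a path argument shows that the connected component containing this extremal witness cannot leave the locus $\{x\in C_{I,\prtb}:I(x)=L\}\times\R_{++}^K\times\{\prtb\}$: if a path tried to escape, either some $\lambda_i$ would hit $0$ (contradicting minimality of $|K|$) or a new constraint would activate (contradicting maximality of $|L|$); and while the path remains in this locus each $\prtb(s)$ is itself a singular value, so the \emph{a priori} finiteness of singular values from \cref{cor:diagonal-perturb} forces $\prtb(\cdot)$ to be constant. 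This double indexing is also what makes the arithmetic land on $(2d+1)^m$ rather than the $(2d)^m$ your single-index sum produces: one sums $d(2d-1)^{n+|K|}$ over all pairs $K\subset L\subset\{1,\dots,m\}$, and the nested binomial sums collapse to the stated bound.
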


\begin{proof}
  Denoting by $|J|$ the cardinality of $J$, we assume that $|J| \in \{0,\dots,m-1\}$
  (otherwise $I$ is empty and there is nothing to prove).
  For $\prtb \in \R$, let
  \[
    C_{I,\prtb} := [g_i \leq \prtb, g_j \leq 0, \, i \in I, \, j \in J] .
  \]
  If $\prtb$ is a singular perturbation, then there exists a point $x \in C_{I,\prtb}$
  for which $0 \in \co \, \{ \nabla g_i(x) \mid i \in I(x) \}$.
  So, there exists a subset of indices $K \subset I(x)$, which we fix, and positive scalars
  $\lambda_i > 0$, $i \in K$, such that $\sum_{i \in K} \lambda_i \, \nabla g_i(x) = 0$ and
  $\sum_{i \in K} \lambda_i = 1$. 
  Furthermore, $K \not \subset J$ since MFCQ holds throughout $[g_j \leq 0, j \in J]$.
  Let $L$ be the set of indices equal to $I(x)$.
  Thus, the sets $K$ and $L$ being fixed, the tuple $(x,\lambda,\prtb) \in \R^n \times \R^K \times \R$
  is solution of the polynomial system
  \begin{equation}
    \label{eq:singular-prtb-system}
    \left\{
    \begin{aligned}
      & \sum_{i \in K} \lambda_i \, \nabla g_i(x) = 0 , \\
      & \sum_{i \in K} \lambda_i = 1 , \\
      & g_j(x) = \prtb , \quad j \in L \cap I , \\
      & g_j(x) = 0 , \quad j \in L \cap J , \\
    \end{aligned}
    \right.
  \end{equation}
  and satisfies the following additional constraints: $\lambda \in \R_{++}^K$,
  $g_\ell(x) < \prtb$ for all $\ell \in I \setminus L$, and $g_\ell(x) < 0$
  for all $\ell \in J \setminus L$.
	
	The first step of the proof is to show that the number of singular perturbations is
   bounded above by the number of connected components of the set of solutions of
  \labelcref{eq:singular-prtb-system} for all possible choices of $K$ and $L$. 
	This is done by constructing an injection from the set of singular perturbations to
  these connected components.

  Fix a singular value $\prtb$ and
  choose a subset $L \subset \{1,\dots,m\}$ with maximal cardinality among all the sets
  of active constraints $I(x)$ such that MFCQ is not satisfied at $x \in C_{I,\prtb}$.
  Then pick a subset $K \subset L$ with minimal cardinality among all the subsets
  $K' \subset L$ such that the system~\labelcref{eq:singular-prtb-system} with $K$ replaced by $K'$
  has a solution $(x,\lambda,\prtb) \in \R^n \times \R^{K'} \times \R$ with
  $x \in C_{I,\prtb}$ and $\lambda \in \R_+^{K'}$.
  Let $(\bar{x},\bar{\lambda},\prtb)$ be such a solution for the particular choice of
  $K$ and $L$.
  Note that $K \not \subset J$ since $[g_j \leq 0, j \in J]$ is regular.
  Also note that $\bar{\lambda} \in \R_{++}^K$ by minimality of $|K|$,
  and that $g_\ell(\bar{x}) < \prtb$ for all $\ell \in I \setminus L$, and $g_\ell(\bar{x}) < 0$
  for all $\ell \in J \setminus L$ by maximality of $|L|$.

  Let $Q \subset \R^n \times \R^K \times \R$ be the connected component of
  the set of solutions of~\labelcref{eq:singular-prtb-system} corresponding to $K$
  and $L$, containing the tuple $(\bar{x},\bar{\lambda},\prtb)$.
  We next prove that
  \[
    Q \subset S(\prtb,K,L) := 
    \{x \in C_{I,\prtb} \mid I(x) = L\} \times \R_{++}^K \times \{ \prtb \} .
  \]
  Toward a contradiction, assume that the above inclusion does not hold.
  There exists therefore a continuous path $(x(\cdot), \lambda(\cdot), \alpha(\cdot))$ from
  $[0,1]$ to $Q$ such that
  \[
    \begin{cases}
      (x(0), \lambda(0), \alpha(0)) = (\bar{x},\bar{\lambda},\prtb) , \\
      (x(1), \lambda(1), \alpha(1)) \notin S(\prtb,K,L) .
    \end{cases}
  \]
  Let $t = \sup \{s \in [0,1] \mid (x(s), \lambda(s), \alpha(s)) \in S(\prtb,K,L) \}$.
  By continuity we have $\alpha(t) = \prtb$, $x(t) \in C_{I,\prtb}$ and
  $\lambda(t) \in \R_+^K$.
  If either $I(x(t)) \neq L$ or $\lambda(t) \notin \R_{++}^K$, then there is a contradiction
  with the maximality of $|L|$ (since we already have $L \subset I(x(t))$) or
  with the minimality of $|K|$.
  Hence, we have $I(x(t)) = L$ and $\lambda(t) \in \R_{++}^K$. 
	Since in addition $x(t) \in C_{I,\prtb}$ and $\alpha(t) = \alpha$, we have
  $(x(t), \lambda(t), \alpha(t)) \in S(\prtb,K,L)$. 
	Finally, we have $t < 1$ since $(x(1), \lambda(1), \alpha(1)) \not\in S(\prtb,K,L)$.
	Using the continuity of the path and \labelcref{eq:singular-prtb-system}, there exists
  $\varepsilon > 0$ such that for all $s \in \left[ t, t+\varepsilon \right)$,
  we have $x(s) \in C_{I,\prtb(s)}$ with $I(x(s)) = L$ and $\lambda(s) \in \R_{++}^K$.
  This implies that $\alpha(s)$ is a singular perturbation for all
  $s \in \left[ t, t+\varepsilon \right)$.
  Combining the continuity of $\alpha(\cdot)$ and \cref{cor:diagonal-perturb},
  $\alpha(\cdot)$ is constant on $\left[ t, t+\varepsilon \right)$. 
	Hence, we have $(x(s), \lambda(s), \alpha(s)) \in S(\prtb,K,L)$ for all
  $s \in \left[ t, t+\varepsilon \right)$. 
	From the definition of $t$, we obtain $t \geq t + \varepsilon$ which is contradictory
  since $\varepsilon > 0$.

  Thus, for every singular perturbation $\prtb$, there exist subsets
  $K \subset L \subset \{1,\dots,m\}$ with $K \cap I \neq \emptyset$ such that
  the set of solutions of the polynomial system~\labelcref{eq:singular-prtb-system}
  with this choice of $K$ and $L$ has at least one connected component included
  in $\R^n \times \R^K \times \{\prtb\}$.
	Hence the mapping sending every singular perturbation to this connected component
  is injective.
	So we have just proved that the number of singular perturbations is upper bounded 
	by the number of connected components of the set of solutions of
  \labelcref{eq:singular-prtb-system} for all possible choices of $K$ and $L$. 
  We can then deduce from Milnor-Thom's bound \labelcref{eq:Milnor-Thom} an upper bound
  for the number of singular perturbations $\prtb$ by summation over all possible
  choices of $K$ and $L$. 

  Denote $p = |I| \in \{1,\dots,m\}$.
  In the computation below we denote by $\ell_1, \ell_2$ the cardinality of $L \cap I$ and
  $L \cap J$, respectively, and by $k_1, k_2$ the cardinality of $K \cap I$ and
  $K \cap J$, respectively.
  Since the system \labelcref{eq:singular-prtb-system} has degree $d$ and
  $n + k_1 + k_2 + 1$ variables, the number of singular perturbation is bounded by
  \begin{align*}
    & \sum_{\substack{ 1 \leq \ell_1 \leq p \\ 0 \leq \ell_2 \leq m-p}}
    \mybinom{p}{\ell_1} \mybinom{m-p}{\ell_2}
    \sum_{\substack{ 1 \leq k_1 \leq \ell_1 \\ 0 \leq k_2 \leq \ell_2}}
    \mybinom{\ell_1}{k_1} \mybinom{\ell_2}{k_2} \,
    d \, (2d-1)^{n+k_1+k_2} \\
    & \qquad = d \, (2d-1)^n \, (2d+1)^{m-p} \, \big( (2d+1)^p - 2^p \big) , \\
    & \qquad = d \, (2d-1)^n \, (2d+1)^m \, \left( 1 - \left( \frac{2}{2d+1} \right)^p \right) .
  \end{align*}
  To conclude, observe that
  \begin{equation}
    \label{eq:IJ}
    \frac{1}{3} \leq 1- \left( \frac{2}{2d+1} \right)^p \leq 1
  \end{equation}
  for all $d \geq 1$ and $1 \leq p \leq m$.
\end{proof}

\begin{remark}
  \label{rmk:equality-constraints-v}
  \begin{enumerate}[label=(\alph*),leftmargin=0pt,itemindent=0.25in,labelsep=3pt]
    \item As attested in a forthcoming example the choice of a partition $(I,J)$ has a very
      marginal impact on the global bound which we have neglected in our main estimate
      \labelcref{eq:Milnor-Thom}\footnote{Our proof shows that its evolves within
        the interval~$[1/3,1]$, see \labelcref{eq:IJ}.}.
    \item Let $h_1, \dots, h_r: \R^n \to \R$ be polynomial functions with maximal degree $d$
      such that the set $[h_1 = 0, \dots, h_r = 0]$ satisfies MFCQ
      (i.e, the first regularity assumption in \cref{def:MFCQ}). 
      Then, with a minor adaptation of the above proof, we can show that for the perturbed sets
      $[g_i \leq \prtb, g_j \leq 0, i \in I, j \in J] \cap [h_1 = 0, \dots, h_r = 0]$ the number of
      singular perturbations $\prtb \in \R$ is bounded by
      \[
        d (2d-1)^{n+r} (2d+1)^m .
      \]
      Indeed, if $\prtb$ is singular, then there exists a tuple
      $(x,\lambda,\kappa,\prtb) \in \R^n \times \R_{++}^K \times \R^r \times \R$ that is
      solution of a polynomial system similar to \labelcref{eq:singular-prtb-system}
      with the following changes:
      \begin{itemize}
        \item add the $r$ equality constraints $h_j(x) = 0$, $j = 1, \dots, r$;
        \item replace the right-hand side of the first equality by
          the linear combination\\
          $\sum_{j=1}^r \kappa_j \, \nabla h_j(x)$.
      \end{itemize}
      The rest of the proof follows the exact same lines, with a trivial adaptation
      of the notation.
      In particular, we do not need to take into account the values of the coefficients $\kappa_j$,
      $j = 1, \dots, r$, contrary to the $\lambda_i$, $i \in K$.
      Using the same notation as in the proof, this new system has degree $d$ and $n+k_1+k_2+r+1$
      variables.
      Whence the bound.
  \end{enumerate}
\end{remark}

Milnor-Thom's bound \labelcref{eq:Milnor-Thom} and a fortiori the bound in
\cref{thm:Milnor-Thom} is not sharp, but one may ask whether it is of the right
order of magnitude.
The following examples show that this is indeed the case, at least regarding the dependence 
with respect to the degree $d$ of the polynomials and the dimension $n$ of the base space.
They also illustrate the absence of sensitivity of our bound with respect to the choice of the partition $(I,J)$.

Indeed, the examples show that even if all but one constraints define a regular set,
the number of singular perturbations generated by the last constraint is of the right order.
In the first example, which is thoroughly explained, the degree is fixed to $d=2$,
and the number of singular perturbations is shown to be exponential with respect to $n$.
In the second example, the number of singular diagonal perturbations is shown to be
highly dependent on the degree $d$.

\begin{example}
  \label{ex:2n-singular}
  Here, we construct an inequality constraint  set in $\R^n$ defined by $n+1$ polynomial
  functions of degree $2$, $n$ of which are convex.
  The number of singular perturbations corresponding to a variation of the unique
  nonconvex constraint is  $3^n-1$.

  Let $a \in \R^n$ be a point in $(-1,1)^n$.
  Then, for $\prtb \in \R$, define the constraint set $C_{0,\prtb}$
  as the set of points $x \in \R^n$ such that
  \[
    \left\{
      \begin{aligned}
        g_0(x) & = 4n - \sum_{i=1}^n (x_i-a_i)^2 \leq \prtb , \\
        g_i(x) & = (x_i)^2 \leq 1 , \quad i = 1,\dots,n .
      \end{aligned}
    \right.
  \]
  For $\prtb < 4n$, the first inequality defines the complement in $\R^n$ of the open
  ball centered at point $a$ with radius $\sqrt{4n-\prtb}$, denoted by $B(a,\sqrt{4n-\prtb})$.
  As for the $n$ last inequalities, they are convex and define the hypercube $[-1,1]^n$.
  Observe that for $\prtb \leq 0$, $C_{0,\prtb}$ is empty since $[-1,1]^n$ is strictly included in
  $B(a,\sqrt{4n-\prtb})$, whereas for $\prtb \geq 4n$, $C_{0,\prtb} = [-1,1]^n$.
  
  We next show that a perturbation $\prtb$ is singular whenever
  a face of $[-1,1]^n$ and the ball $B(a,\sqrt{4n-\prtb})$ are tangent. 
  First note that the constraint sets $[g_0 \leq \prtb]$ and $[g_1 \leq 1,\cdots,g_n \leq 1]$
  both satisfies MFCQ.
  Hence, the constraint qualification for $C_{0,\prtb}$ may fail only at points where the
  constraint $g_0$ and at least one of the constraints $g_i$ with $i \in \{1,\cdots,n\}$ are active,
  that is, at intersection points between the boundary of the hypercube $[-1,1]^n$ and
  the boundary of the ball $B(a,\sqrt{4n-\prtb})$.

  Let $z$ be such a point for a given $\prtb$.
  There exists a nonempty subset of indices $I$ and integers $v_i \in \{\pm 1\}$ for $i \in I$
  such that $z_i = v_i$ for all $i \in I$ and $|z_j| < 1$ for all $j \notin I$.
  Then MFCQ is not satisfied at $z$ if and only if the convex hull of the gradients $\nabla g_0(z)$
  and $\nabla g_i(z)$ with $i \in I$ contains $0$, and since $[-1,1]^n$ is qualified, this is
  equivalent to $-\nabla g_0(z)$ being in the convex cone generated by the gradients
  $\nabla g_i(z)$, $i \in I$.
  Since $-\nabla g_0(z) = 2(z-a)$ and $\nabla g_i(z) = 2 v_i e_i$ for every $i \in I$, where $e_i$
  denotes the $i$th coordinate vector of $\R^n$, the latter condition holds if and only if  
  $z_j = a_j$ for all $j \notin I$, i.e., if and only if $z$ is the orthogonal projection of $a$
  on the face $F = \{ x \in \R^n \mid x_i = v_i, \; i \in I, \; |x_j| \leq 1, \; j \notin I\}$.
  In other words, MFCQ is not satisfied at point $z$ if and only if a face of the hypercube $[-1,1]^n$
  and the ball $B(a,\sqrt{4n-\prtb})$ are tangent at $z$.

  Now, given $k \in \{0,\ldots,n-1\}$ there are  ${n \choose k} 2^{n-k}$ faces of dimension $k$
  in the cube  $[-1,1]^n$.
  Thus, by choosing adequately $a$ in $(-1,1)^n$ so that, for all $\prtb$, $B(a,\sqrt{4n-\prtb})$
  is tangent to a unique face of $[-1,1]^n$ at most, we deduce that the total number of singular
  perturbations is $\sum_{k=0}^{n-1} {n \choose k} 2^{n-k} = 3^n-1$.
  \Cref{fig:2n-singular} shows a representation of $C_{0,\prtb}$ in $\R^2$ for each singular
  value $\prtb$.

  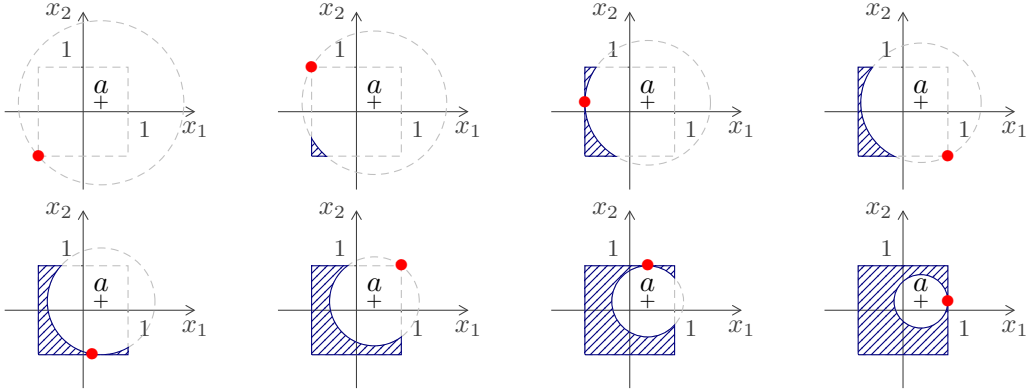
\begin{figure}[bht]
    \centering
    \def\scale{0.59}
    \def\ctr{2/5,1/5}
    \def\radlim{1.85}
    \def\rect{(-1, -1) rectangle (1, 1)}
    \def\rada{1.844}
    \def\circa{(\ctr) circle (\rada)}
    \def\radb{1.612}
    \def\circb{(\ctr) circle (\radb)}
    \def\radc{1.4}
    \def\circc{(\ctr) circle (\radc)}
    \def\radd{1.342}
    \def\circd{(\ctr) circle (\radd)}
    \def\rade{1.2}
    \def\circe{(\ctr) circle (\rade)}
    \def\radf{1.0}
    \def\circf{(\ctr) circle (\radf)}
    \def\radg{0.8}
    \def\circg{(\ctr) circle (\radg)}
    \def\radh{0.6}
    \def\circh{(\ctr) circle (\radh)}
    \def\axes{
      \draw[->, >= angle 60,black!75] (-1.75,0) -- (2.5,0);
      \draw[black!75] (2.5,0) node[below] {\small $x_1$};
      \draw[black!75] (1,0) -- (1,-0.05);
      \draw[black!75] (1,0) node[below right] {\footnotesize $1$};
      \draw[->, >= angle 60,black!75] (0,-1.75) -- (0,2.25);
      \draw[black!75] (0,2.25) node[left] {\small $x_2$};
      \draw[black!75] (-0.05,1) -- (0,1);
      \draw[black!75] (0,1) node[above left] {\footnotesize $1$};
    }

    \hspace{\stretch{2}}
    \begin{tikzpicture}[scale=\scale]
      \axes
      \draw (\ctr) node {\tiny $+$};
      \draw (\ctr) node [above] {$a$};
      \draw[gray!50,densely dashed] \circa;
      \draw[gray!50,densely dashed] \rect;
      \draw[red] (-1, -1) node {$\bullet$};
    \end{tikzpicture}
    \hspace{\stretch{1}}
    \begin{tikzpicture}[scale=\scale]
      \axes
      \draw (\ctr) node {\tiny $+$};
      \draw (\ctr) node [above] {$a$};
      \draw[gray!50,densely dashed] \circb;
      \draw[gray!50,densely dashed] \rect;
      \begin{scope}[even odd rule]
        \clip \circb (\ctr) circle (\radlim);
        \draw[myblue] \rect;
        \fill[pattern=north east lines,pattern color=myblue] \rect;
      \end{scope}
      \begin{scope}
        \clip \rect;
        \draw[myblue] \circb;
      \end{scope}
      \draw[red] (-1, 1) node {$\bullet$};
    \end{tikzpicture}
    \hspace{\stretch{1}}
    \begin{tikzpicture}[scale=\scale]
      \axes
      \draw (\ctr) node {\tiny $+$};
      \draw (\ctr) node [above] {$a$};
      \draw[gray!50,densely dashed] \circc;
      \draw[gray!50,densely dashed] \rect;
      \begin{scope}[even odd rule]
        \clip \circc (\ctr) circle (\radlim);
        \draw[myblue] \rect;
        \fill[pattern=north east lines,pattern color=myblue] \rect;
      \end{scope}
      \begin{scope}
        \clip \rect;
        \draw[myblue] \circc;
      \end{scope}
      \draw[red] (-1, 1/5) node {$\bullet$};
    \end{tikzpicture}
    \hspace{\stretch{1}}
    \begin{tikzpicture}[scale=\scale]
      \axes
      \draw (\ctr) node {\tiny $+$};
      \draw (\ctr) node [above] {$a$};
      \draw[gray!50,densely dashed] \circd;
      \draw[gray!50,densely dashed] \rect;
      \begin{scope}[even odd rule]
        \clip \circd (\ctr) circle (\radlim);
        \draw[myblue] \rect;
        \fill[pattern=north east lines,pattern color=myblue] \rect;
      \end{scope}
      \begin{scope}
        \clip \rect;
        \draw[myblue] \circd;
      \end{scope}
      \draw[red] (1, -1) node {$\bullet$};
    \end{tikzpicture}
    \hspace{\stretch{2}}

    \hspace{\stretch{2}}
    \begin{tikzpicture}[scale=\scale]
      \axes
      \draw (\ctr) node {\tiny $+$};
      \draw (\ctr) node [above] {$a$};
      \draw[gray!50,densely dashed] \circe;
      \draw[gray!50,densely dashed] \rect;
      \begin{scope}[even odd rule]
        \clip \circe (\ctr) circle (\radlim);
        \draw[myblue] \rect;
        \fill[pattern=north east lines,pattern color=myblue] \rect;
      \end{scope}
      \begin{scope}
        \clip \rect;
        \draw[myblue] \circe;
      \end{scope}
      \draw[red] (1/5, -1) node {$\bullet$};
    \end{tikzpicture}
    \hspace{\stretch{1}}
    \begin{tikzpicture}[scale=\scale]
      \axes
      \draw (\ctr) node {\tiny $+$};
      \draw (\ctr) node [above] {$a$};
      \draw[gray!50,densely dashed] \circf;
      \draw[gray!50,densely dashed] \rect;
      \begin{scope}[even odd rule]
        \clip \circf (\ctr) circle (\radlim);
        \draw[myblue] \rect;
        \fill[pattern=north east lines,pattern color=myblue] \rect;
      \end{scope}
      \begin{scope}
        \clip \rect;
        \draw[myblue] \circf;
      \end{scope}
      \draw[red] (1, 1) node {$\bullet$};
    \end{tikzpicture}
    \hspace{\stretch{1}}
    \begin{tikzpicture}[scale=\scale]
      \axes
      \draw (\ctr) node {\tiny $+$};
      \draw (\ctr) node [above] {$a$};
      \draw[gray!50,densely dashed] \circg;
      \draw[gray!50,densely dashed] \rect;
      \begin{scope}[even odd rule]
        \clip \circg (\ctr) circle (\radlim);
        \draw[myblue] \rect;
        \fill[pattern=north east lines,pattern color=myblue] \rect;
      \end{scope}
      \begin{scope}
        \clip \rect;
        \draw[myblue] \circg;
      \end{scope}
      \draw[red] (2/5, 1) node {$\bullet$};
    \end{tikzpicture}
    \hspace{\stretch{1}}
    \begin{tikzpicture}[scale=\scale]
      \axes
      \draw (\ctr) node {\tiny $+$};
      \draw (\ctr) node [above] {$a$};
      \draw[gray!50,densely dashed] \circh;
      \draw[gray!50,densely dashed] \rect;
      \begin{scope}[even odd rule]
        \clip \circh (\ctr) circle (\radlim);
        \draw[myblue] \rect;
        \fill[pattern=north east lines,pattern color=myblue] \rect;
      \end{scope}
      \begin{scope}
        \clip \rect;
        \draw[myblue] \circh;
      \end{scope}
      \draw[red] (1, 1/5) node {$\bullet$};
    \end{tikzpicture}
    \hspace{\stretch{2}}
    \caption{Singular perturbations of a constraint set (hatched area)
      defined by degree~2 polynomials.}
    \label{fig:2n-singular}
  \end{figure}
\end{example}

\begin{remark}
  The dependence of the number of singular values in the previous exam\-ple, $3^n-1$,
  with respect to $m$ and $n$ does not appear clearly since $m = n+1$.
  In this regard, the gap between this number and the bound predicted by
  \cref{thm:Milnor-Thom}, $2 \times 3^n \times 5^{n+1} = 10 \times 15^n$, questions
  the relevance of the exponential term in $m$ appearing in \cref{thm:Milnor-Thom}.
  In order to better understand this dependence, we could think of an example
  similar to \cref{ex:2n-singular} where the hypercube would be replaced by a polytope
  with $m$ facets, hence defined by $m$ linear constraints (instead of $2n$ in
  \cref{ex:2n-singular}).
  However, the maximum number of vertices of such a polytope, given by the upper bound
  theorem \cite{McM70}, is asymptotically equal to $O(m^{\lfloor n/2 \rfloor})$
  (see~\cite{Sei95}).
  Hence, such an example could not have a number of singular perturbations exponential
  with respect to $m$.
  It then remains an open question to understand the dependence of the maximum number
  of singular values with respect to $m$, $n$.
\end{remark}

\begin{example}
  We build an example in $\R^n$
  with $n+1$ polynomial constraints, degree $2d$, and
  we show that the perturbation of a unique constraint generates at least
  $d^n$ singular values.

  For any even integer $d$, let us consider the polynomial
  $Q_d = \prod_{k=1}^d (X^2 - k^2)$.
  Let $H$ be the set of points $x \in \R^n$ such that $g_i(x) = Q_d(x_i) \leq 0$,
  $i = 1,\dots,n$.
  The set $H$ is a disjoint union of $d^n$ boxes.
  More precisely, since $d$ is even, $Q_d$ is nonpositive on the intervals
  $[2k-1, 2k]$, $k=1,\dots,d/2$, and on their symmetrical images with respect to $0$.
  Then $H$ is the (disjoint) union of the $d^n$ boxes
  \begin{equation}
    \label{eq:boxes}
    H(v,k) := \prod_{i=1}^n v_i \, [2k_i-1, 2k_i] , \quad
    v \in \{ \pm 1 \}^n , \quad k \in \left\{1, \dots, d/2 \right\}^n .
  \end{equation}
  Note that all the boxes \labelcref{eq:boxes} are included in $[-d,d]^n$.
  Let $a$ be some point in $(-d,d)^n$ and for $\prtb \in \R$, define $C_{0,\prtb}$ as
  the set of points contained in $H$ and that satisfy in addition
  $g_0(x) = 4 n d^2 - \|x-a\|^2 \, \leq \, \prtb$.
  \Cref{fig:dn-singular} displays $C_{0,\prtb}$ for $d = 4$.

  \begin{figure}[htb]
    \centering
    \begin{tikzpicture}[scale=0.66]
      \draw[->, >= angle 60,black!75] (-4.5,0) -- (4.5,0);
      \draw[black!75] (4.5,0) node[below] {\small $x_1$};
      \foreach \x in {-4,...,4}
      \draw[black!75] (\x,0) -- (\x,-0.05);
      \draw[black!75] (1,0) node[below] {\footnotesize $1$};
      \draw[->, >= angle 60,black!75] (0,-4.25) -- (0,4.5);
      \draw[black!75] (0,4.5) node[left] {\small $x_2$};
      \foreach \y in {-4,...,4}
      \draw[black!75] (0,\y) -- (-0.05,\y);
      \draw[black!75] (0,1) node[left] {\footnotesize $1$};
      \foreach \x in {-4,-2,1,3} \foreach \y in {-4,-2,1,3}
      \draw[gray!50,densely dashed] (\x,\y) rectangle (\x+1,\y+1);
      \draw (3/5,2/5) node {\footnotesize $+$};
      \draw (3/5,2/5) node [above] {$a$};
      \draw[gray!50,densely dashed] (3/5,2/5) circle (3.053);
      \begin{scope}[even odd rule]
        \clip (3/5,2/5) circle (3.053) (-4,-4) rectangle (4,4);
        \foreach \x in {-4,-2,1,3} \foreach \y in {-4,-2,1,3}
        {
          \fill[pattern=north east lines,pattern color=myblue] (\x,\y) rectangle (\x+1,\y+1);
          \draw[myblue] (\x,\y) rectangle (\x+1,\y+1);
        }
      \end{scope}
      \begin{scope}
        \clip (-2,-2) rectangle (-1,-1);
        \draw[myblue] (3/5,2/5) circle (3.053);
      \end{scope}
      \begin{scope}
        \clip (3,-2) rectangle (4,-1);
        \draw[myblue] (3/5,2/5) circle (3.053);
      \end{scope}
      \begin{scope}
        \clip (3,1) rectangle (4,2);
        \draw[myblue] (3/5,2/5) circle (3.053);
      \end{scope}
      \begin{scope}
        \clip (1,3) rectangle (2,4);
        \draw[myblue] (3/5,2/5) circle (3.053);
      \end{scope}
      \begin{scope}
        \clip (-2,3) rectangle (-1,4);
        \draw[myblue] (3/5,2/5) circle (3.053);
      \end{scope}
      \foreach \x in {-4,-2,2,4} \foreach \y in {-4,-2,2,4}
      \draw[red] (\x,\y) node {\footnotesize $\bullet$};
    \end{tikzpicture}
    \caption{Constraint set (hatched area) defined by $n+1$ polynomials of degree
      at most $2d$ with $d^n$ singular perturbations ($n=2$, $d=4$).}
    \label{fig:dn-singular}
  \end{figure}
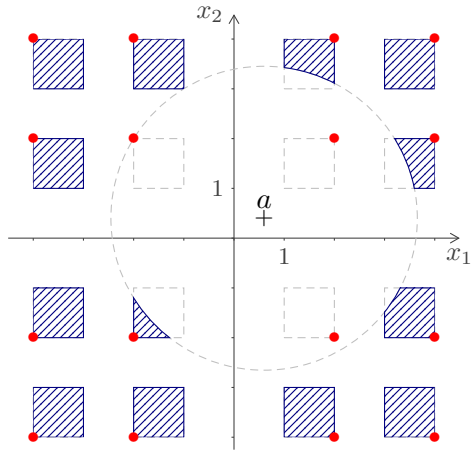

  Let us follow the  arguments of \cref{ex:2n-singular}. Observe that
  for each vertex $v \in \{ \pm 1 \}^n$ and for each tuple of indices
  $k \in \{1,\dots,d/2\}^n$, there exists a unique perturbation $\prtb \in (0, 4 n d^2)$
  such that MFCQ does not hold at point $(2 k_i \, v_i)_{1 \leq i \leq n}$, that is,
  when the box $H(v,k)$ defined in~\labelcref{eq:boxes} and the sphere centered at $a$
  with radius $\sqrt{4 n d^2 - \prtb}$ have a unique contact point
  (see \cref{fig:dn-singular}).
  Finally, by choosing adequately $a$, it is possible to show that all the $d^n$ singular
  perturbations mentioned above are distinct.
\end{example}

\section{Applications to optimization algorithms}
\label{sec:applications}

We illustrate here the results of \Cref{sec:finite-singularities} through
some classical algorithms for nonlinear optimization.
Our approach consists in embedding the original problem within some one-parameter family
of optimization problems:
\begin{equation}
  \label[prb]{pb:parametric}
  \tag{$\PNLP$}
  \begin{aligned}
    \text{minimize} & \enspace f(x) \\
    \text{subject to} & \enspace g_1(x) \leq \prtb, \dots, g_m(x) \leq \prtb,
  \end{aligned}
\end{equation}
where $f, g_1, \dots, g_m:\R^m\to \R$ are differentiable definable functions.

A first obvious but important consequence is that any algorithm which is ope\-ratio\-nal
under the standard qualification condition can be applied to \cref{pb:parametric}
except perhaps for a finite number of parameters $\prtb$.
In view of the fact that $\val \PNLP$ tends to $\val (\mathcal{P}_0)$ (see
\cref{lem:continuity-value}), it provides a natural way of approximating $(\mathcal{P}_0)$.
This can be illustrated in a straightforward manner with many types of algorithms,
see e.g., \cite{NW06,GW12,Ber16}, see also \cite{homotopy} for continuation techniques in optimization.
Estimating the complexity of such an approach is a matter for future research\footnote{It is likely
to be connected to the results from \cite{Fra90} and \cite{FQ12}.}.
In this spirit of a direct approximation, we provide an illustration
involving SDP relaxations on the KKT ideal which improves a series of results
of \cite{DNS06,DNP07,ABM14}.

Another family of applications considered below is provided by infeasible SQP methods which 
often require strong qualification conditions assumptions.

\subsection{Infeasible Sequential Quadratic Programming}

We consider the {\em Extended Sequential
Quadratic Method}, \ESQM, proposed by Auslender~\cite{Aus13} and based on
a $\ell^\infty$ penalty function.
Other methods could be treated as, for instance, Flechter's  S$\ell^1$QP \cite{Fle85}. We make the following very basic assumptions:

\begin{assumption}
  \label{asm:esqm}
  \leavevmode
  \begin{enumerate}
    \item {\em (Regularity).} The functions $f, g_1, \dots, g_m: \R^n \to \R$ are
      $\mathcal{C}^2$ with Lipschitz continuous gradients.
      We denote by $L, L_1,\dots,L_m > 0$ their Lipschitz constants, respectively.
      \label{it:regularity}
    \item {\em (Compactness).} The constraint sets
      $C_\prtb = [g_1 \leq \prtb,\dots,g_m \leq \prtb]$ are compact and nonempty
      for all $\prtb \geq 0$.
      \label{it:compact}
    \item {\em (Boundedness).} $\inf_{x \in \R^n} f(x) > -\infty$.
      \label{it:boundedness}
    \end{enumerate}
\end{assumption}

The general SQP method we consider, \ESQM, is described below.
The strength of the following general convergence theorem is to rely merely on
semi-alge\-brai\-city/de\-fina\-bility and boun\-ded\-ness assumptions.
In particular, it does not require any qualification assumptions whatsoever.
Another distinctive feature of this result is to allow to treat all at once many issues
such as nonconvexity, continuum of stationary points, infeasibility, nonlinear constraints or
oscillations (see \cite{BP16} for more on the key issues).

\begin{algorithm}[htb]
  \caption{-- Extended Sequential Quadratic Method \cite{Aus13,BP16}}
  \label{algo:esqm}
  \begin{algorithmic}
    \STATE {\bfseries Step 1}:
    Choose $x_0 \in C_\prtb$, $\beta_0 > 0$, $\delta > 0$, $\lambda \geq L$ and
    $\lambda' \geq \max_{i} L_i$, and set $k \leftarrow 0$.
    \STATE \label{it:min-pb} {\bfseries Step 2}:
    Compute $x_{k+1}$ solution (along with some $s \in \R$) of
    \vskip-1\baselineskip
    \begin{align*}
      \operatorname*{minimize}_{s \in \R , \: y \in \R^n} \enspace & f(x_k) + \<\nabla f(x_k),y-x_k> +
      \beta_k s + \frac{\lambda + \beta_k \lambda'}{2} \|y-x_k\|^2 \\
      \text{s.t.} \enspace & g_i(x_k) + \<\nabla g_i(x_k),y-x_k> \leq \prtb + s
      \enspace , \quad i = 1,\dots,m, \\
      & s \geq 0.
    \end{align*}
    \vskip-0.5\baselineskip
    \STATE {\bfseries Step 3}: If \enspace
    $g_i(x_k) + \<\nabla g_i(x_k),x_{k+1}-x_k> \leq \prtb$, \enspace $i = 1,\dots,m$,
    \enspace then \enspace $\beta_{k+1} \leftarrow \beta_k$. \\
    \newlength{\myl} \settowidth{\myl}{{\bfseries Step 3}:\ }
    \hspace{\myl}Else \enspace $\beta_{k+1} \leftarrow \beta_k + \delta$.
    \STATE {\bfseries Step 4}: $k \leftarrow k+1$, go to step 2.
  \end{algorithmic}
\end{algorithm}

\begin{theorem}[Large penalty parameters yield convergence of ESQM]
  \label{thm:esqm}
  Assume that \cref{it:regularity,it:compact,it:boundedness} hold (smoothness,
  compactness, boundedness).
  For all parameters $\prtb \geq 0$, except for a finite number of them, 
  there exists a number $\beta(\prtb)\geq 0$ such that {\rm \ESQM}
  initialized with any $\beta_0 \geq \beta(\prtb)$ generates
  a sequence $(x_k)_{k \in \N}$ that converges to some KKT point of \cref{pb:parametric}.
\end{theorem}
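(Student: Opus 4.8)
The strategy is to combine the genericity of diagonal regular perturbations, \cref{cor:diagonal-perturb}, with the convergence analysis of \ESQM\ under a constraint qualification (Auslender~\cite{Aus13}, see also \cite{BP16}). By \cref{cor:diagonal-perturb} there is a finite set $\Sigma\subset\R$ such that MFCQ holds throughout $C_\prtb=[g_1\leq\prtb,\dots,g_m\leq\prtb]$ for every $\prtb\notin\Sigma$; it therefore suffices to establish the convergence claim for an arbitrary fixed $\prtb\geq0$ with $\prtb\notin\Sigma$. For such a $\prtb$, the set $C_\prtb$ is compact and nonempty by \cref{it:compact}, and MFCQ --- equivalently, by \cref{thm:MFCQ-metric-regularity}, metric regularity of the constraint mapping --- holds at each of its points.

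The first substantive step is to upgrade the pointwise MFCQ on $C_\prtb$ to a \emph{uniform} version on a neighborhood. Using the continuity of the gradients $\nabla g_i$ and the compactness of $C_\prtb$, I would cover $C_\prtb$ by finitely many balls on each of which a single unit vector strictly decreases all nearly active constraints, and extract constants $\eta,c>0$ such that $N:=\{x\mid g_i(x)\leq\prtb+\eta,\ i=1,\dots,m\}$ is compact and, for every $x\in N$, some unit vector $y$ satisfies $\langle\nabla g_i(x),y\rangle\leq-c$ whenever $g_i(x)\geq\prtb-\eta$. Next, for the $\ell^\infty$ merit function $P_\beta(x):=f(x)+\beta\max\{\,0,\max_{1\leq i\leq m}(g_i(x)-\prtb)\,\}$, this uniform qualification together with the Lipschitz continuity of the gradients (\cref{it:regularity}) should produce a finite threshold $\beta(\prtb)\geq0$ such that $P_{\beta(\prtb)}$ is an exact penalty function for \cref{pb:parametric} on $N$ and, along the \ESQM\ iterates, the increments of $\beta_k$ prescribed in Step~3 of the algorithm occur only finitely often once $\beta_k\geq\beta(\prtb)$ --- the strongly convex regularization in Step~2 keeping $\|x_{k+1}-x_k\|$, and hence the linearization error, under control. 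Consequently, if $\beta_0\geq\beta(\prtb)$ the sequence $(\beta_k)$ is bounded and, changing only by jumps of size $\delta$, is eventually constant, equal to some $\bar\beta\geq\beta(\prtb)$.

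With $\bar\beta$ frozen, \ESQM\ behaves as a descent scheme for $P_{\bar\beta}$. The optimality conditions of the subproblem in Step~2 yield a sufficient-decrease inequality $P_{\bar\beta}(x_k)-P_{\bar\beta}(x_{k+1})\geq\gamma\|x_{k+1}-x_k\|^2$ with some $\gamma>0$, and $P_{\bar\beta}$ is bounded below because $\inf_{\R^n}f>-\infty$ (\cref{it:boundedness}); hence $\sum_k\|x_{k+1}-x_k\|^2<\infty$, in particular $\|x_{k+1}-x_k\|\to0$, and the iterates stay in a bounded sublevel set of $P_{\bar\beta}$ (using \cref{it:compact}), so $(x_k)$ is bounded. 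Passing to the limit in the subproblem optimality conditions, using $\|x_{k+1}-x_k\|\to0$ and the uniform qualification of the previous step to rule out infeasible accumulation points, one gets that every cluster point of $(x_k)$ is feasible for \cref{pb:parametric} and is a KKT point --- this is precisely the conclusion of the convergence theory of \cite{Aus13,BP16}. Finally, since $f,g_1,\dots,g_m$ are definable in the same o-minimal structure, so is $P_{\bar\beta}$, which therefore satisfies the Kurdyka--{\L}ojasiewicz inequality; combining this with the sufficient-decrease property above and the relative-error bound $\dist(0,\partial P_{\bar\beta}(x_{k+1}))\leq b\,\|x_{k+1}-x_k\|$ (again a consequence of the subproblem optimality and the Lipschitz gradients), the abstract convergence theorem for Kurdyka--{\L}ojasiewicz descent methods (see \cite{BP16}) forces the whole sequence $(x_k)$ to converge to a single point, which by the previous paragraph is a KKT point of \cref{pb:parametric}, as claimed.

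The delicate point is the exact-penalty step: turning the qualitative statement ``MFCQ holds at every point of the compact set $C_\prtb$'' into one finite penalty threshold $\beta(\prtb)$ that simultaneously makes the $\ell^\infty$ penalty exact and forces the algorithmic penalty sequence to stabilize, which requires a careful control of the interplay between the SQP linearization and the quadratic regularization. Once $\beta_k$ has become constant, the remaining steps --- sufficient decrease, boundedness, subsequential convergence to KKT points, and the Kurdyka--{\L}ojasiewicz upgrade --- are by now standard machinery for definable descent methods and are essentially contained in \cite{Aus13,BP16}; the only thing left to check is that those results apply with the frozen penalty parameter.
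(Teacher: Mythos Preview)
Your overall architecture --- \cref{cor:diagonal-perturb} to get regularity of $C_\prtb$ for all but finitely many $\prtb$, then invoke the convergence theory of \cite{Aus13,BP16} --- matches the paper's. The substantive difference lies in the step you yourself flag as delicate: confining the iterates to a region where the needed qualification holds.

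The paper does \emph{not} argue via exact penalty and stabilization of $\beta_k$. Instead it exploits the finiteness of singular diagonal values more fully: since singular $\prtb$'s are finite, for a regular $\prtb$ there is $\varepsilon>0$ with the whole interval $[\prtb,\prtb+\varepsilon]$ regular, so MFCQ holds throughout $C_{\prtb'}$ for every $\prtb'\in[\prtb,\prtb+\varepsilon]$. This immediately yields the qualification condition of \cite{BP16} at every point of $C_{\prtb+\varepsilon}$, with no compactness-and-covering argument needed. The paper then proves, directly from the subproblem in Step~2 and the Lipschitz gradients (and \emph{without} using any qualification), the monotone decrease
\[
\tfrac{1}{\beta_{k+1}}\bigl(f(x_{k+1})-f_{\min}\bigr)+\max_{0\le i\le m} g_i(x_{k+1})
\;\le\;
\tfrac{1}{\beta_{k}}\bigl(f(x_{k})-f_{\min}\bigr)+\max_{0\le i\le m} g_i(x_{k}),
\]
where $g_0\equiv\prtb$. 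Taking $\beta(\prtb)=(f(x_0)-f_{\min})/\varepsilon$ and $x_0\in C_\prtb$ then gives $\max_i g_i(x_k)\le\prtb+\varepsilon$ for all $k$, so the iterates never leave $C_{\prtb+\varepsilon}$; the results of \cite{BP16} apply on that set and deliver convergence to a KKT point.

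By contrast, your route has a circularity you have not closed. Your uniform qualification is only valid on $N=C_{\prtb+\eta}$, and you then use it to argue that $\beta_k$ stabilizes; but the stabilization arguments in \cite{Aus13,BP16} already presuppose the iterates lie where the qualification holds. You give no a~priori mechanism keeping $x_k\in N$ before $\beta_k$ freezes (the sufficient-decrease inequality you write for $P_{\bar\beta}$ is for a \emph{fixed} $\bar\beta$ and is only invoked after stabilization). The paper's descent inequality is precisely the missing ingredient: it bounds $\max_i g_i(x_k)$ unconditionally and thereby decouples confinement from qualification. Once you insert that inequality (or an equivalent a~priori bound), your remaining steps --- boundedness, subsequential KKT, KL-based single-limit convergence via \cite{BP16} --- are fine and coincide with what the paper cites.
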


\begin{proof}
  Following \cref{cor:diagonal-perturb}, there exist a finite family of parameters
  $\mathcal{A} \subset \R_+$ such that for all $\prtb \notin \mathcal{A}$, MFCQ holds
  throughout $C_\prtb$.
  Let us fix a parameter $\prtb \in \R_+ \setminus \mathcal{A}$.
  Then there exists a positive real number $\varepsilon$ such that
  $[\prtb,\prtb+\varepsilon] \subset \R_{+} \setminus \mathcal{A}$.
  This implies that for every $x \in C_{\prtb+\varepsilon}$, if $g_j(x) \geq \prtb$
  for some index $j$, then there exist $y \in \R^n$ such that $\<y, \nabla g_i(x) > < 0$
  for all indices $i$ such that $g_i(x) = \max_{1 \leq \ell \leq m} g_\ell(x)$.
  This follows from the fact that $x \in C_{\prtb'}$ where
  $\prtb' = \max_{1 \leq i \leq m} g_i(x)$ is such that
  $\prtb \leq \prtb' \leq \prtb + \varepsilon$, so that MFCQ holds at $x \in C_{\prtb'}$.

  Let $f_{\min} = \inf_{x \in \R^n} f(x)$ and $g_0$ be the constant function equal to $\prtb$.
  Set $d_k = x_{k+1} - x_k$.
  For every $k \in \N$, we have
  \begin{align*}
    & \frac{1}{\beta_{k+1}} (f(x_{k+1}) - f_{\min}) + \max_{0 \leq i \leq m} g_i(x_{k+1}) \\
    & \hspace{6em} \leq \frac{1}{\beta_k} (f(x_{k+1}) - f_{\min}) +
    \max_{0 \leq i \leq m} g_i(x_{k+1}) , \\
    & \hspace{6em} \leq \frac{1}{\beta_k} (f(x_k) + \< \nabla f(x_k), d_k > - f_{\min}) \\
    & \hspace{12em} + \max_{0 \leq i \leq m} \big( g_i(x_k) + \< \nabla g_i(x_k), d_k > \big) +
    \frac{\lambda + \beta_k \lambda'}{2 \beta_k} \|d_k\|^2 , \\
    & \hspace{6em} \leq \frac{1}{\beta_k} (f(x_k) - f_{\min}) + \max_{0 \leq i \leq m} g_i(x_k) ,
  \end{align*}
  where the second inequality comes from the Lipschitz continuity of the gradients of
  the functions invovled, and the third inequality follows from the minimization problem
  in \cref{it:min-pb} of~\ESQM.

  Now choose $\beta_0 \geq (f(x_0)-f_{\min}) / \varepsilon$.
  By a trivial induction, we deduce that, for every integer $k \in \N$,
  \[
    \max_{0 \leq i \leq m} g_i(x_k) 
    \leq \frac{1}{\beta_0} (f(x_0) - f_{\min}) + \max_{0 \leq i \leq m} g_i(x_0) 
    \leq \prtb + \varepsilon .
  \]
  Hence, all the points $x_k$ generated by \ESQM with the latter choice of $\beta_0$
  lie in $C_{\prtb+\varepsilon}$, and so satisfy the following qualification condition
  (an essential ingredient in \cite{BP16}): if $g_j(x) \geq \prtb$ for some index $j$ and
  some $x$ in $\R^n$, then there exists $y \in \R^n$ such that $\<y, \nabla g_i(x) > < 0$
  for all indices $i$ such that $g_i(x) = \max_{1 \leq \ell \leq m} g_\ell(x)$. 
  
  The fact that any cluster point of $(x_k)_{k \in \N}$ is a KKT point of
  \cref{pb:parametric} readily follows from \cite[Th.~2]{BP16}
  (see also \cite[Th.3.1]{Aus13}).
  The convergence of $(x_k)_{k \in \N}$, follows
  then from \cite[Th.~3]{BP16} and  the definability assumptions.
\end{proof}

\begin{remark}[Stabilization of penalty parameters]
  \begin{enumerate}[label=(\alph*),leftmargin=0pt,itemindent=0.25in,labelsep=3pt]
    \item For a fixed $\prtb$, the sequence of penalty parameters $\beta_k$ is constant
      after a finite number of iterations.
      This was already an essential result in \cite{Aus13} which still holds here.
    \item As in \cite{BP16}, rates of convergence are available when the data are in addition
      real semi-algebraic.
  \end{enumerate}
\end{remark}

\subsection{Exact relaxation in polynomial programming}

A standard approach for solving \cref{pb:parametric} when data are polynomial
relies on hierarchies of semidefinite programming, see \cite{Las01,Las10}.
It is known that, generically, these hierarchies are exact, meaning that they converge
in a finite number of steps (see \cite{Nie14}), but this behavior cannot be
detected a priori.
In order to construct SDP hierarchies  with guaranteed finite convergence behavior,
 some authors introduced redundant constraints in the hierarchies.
The work presented in \cite{DNS06} investigates unconstrained problems and the convergence of
SDP hierarchies over the variety of critical points,
while \cite{DNP07} considers more generally KKT ideals. 
The recent work \cite{ABM14} extends these results further and propose a relaxation
which is either exact or which detects in finitely many steps the absence of ``KKT minimizers''
 \cite[Th.~6.3]{ABM14}.

A drawback of this method is that it fails whenever optimal solutions of \cref{pb:parametric}
do not satisfy KKT conditions\footnote{Abril Bucero and Mourrain gave
hints to deal with such a situation, but at the expense of an increasing complexity
in the construction of the hierarchies.}.
\Cref{cor:diagonal-perturb} shows that this issue is only a concern for finitely many
values of the perturbation parameter $\prtb$ in \hyperref[pb:parametric]{$\PNLP$} 
and that the relaxation remains exact outside of this finite set. We  point out that the constructions presented in \cite{DNS06,DNP07}, similar in their
approach, require much stronger assumptions on the constraint ideal than the one we propose.

We now explain these facts;
\cref{sec:polynomials} contains the basic notation/definition used below.
We first describe the polynomial problem from which the relaxation in \cite{ABM14}
is constructed.
Let $\prtb \in \R$ be such that $C_\prtb = [g_1 \leq \prtb,\dots,g_m \leq \prtb]$ is nonempty.
The Lagrangian associated with \cref{pb:parametric} is defined for
$x \in \R^n$ and $\lambda \in \R^m$ by
\[
  L^\prtb(x,\lambda) := f(x) + \sum_{i = 1}^m \lambda_i \, ( g_i(x) - \prtb ) .
\]
Then we introduce the KKT ideal defined on $\R[x,\lambda]$ by
\[
  I_\KKT^\prtb := \left\langle \frac{\partial L^\prtb}{\partial x_1},\dots,
  \frac{\partial L^\prtb}{\partial x_n},
  \lambda_1 \, (g_1-\prtb),\dots,\lambda_m \, (g_m-\prtb) \right\rangle .
\]
Let $\{h^\prtb_1,\dots,h^\prtb_r\} \subset \R[x]$ be a generating family of the ideal
$I_\KKT^\prtb \cap \R[x]$:
\[
  \langle h^\prtb_1,\dots,h^\prtb_r \rangle = I_\KKT^\prtb \cap \R[x] .
\]
Note that such a family can be obtained by computing a Gr\"obner basis of $I_\KKT^\prtb$,
see \cite{CLO15}.
Adding these redundant constraints to \cref{pb:parametric} yields the following
polynomial problem.
\begin{equation}
  \label[prb]{pb:KKT}
  \tag{$\PKKT$}
  \begin{aligned}
    \text{minimize} & \enspace f(x) \\
    \text{subject to} & \enspace g_1(x) \leq \prtb, \dots, g_m(x) \leq \prtb  ,  \\
    & \enspace h^\prtb_1(x) = 0, \dots, h^\prtb_r(x) = 0 .
  \end{aligned}
\end{equation}
Observe that any minimizer of \cref{pb:parametric} that is also a KKT point
is a minimizer of \cref{pb:KKT}.
Hence, if the Mangasarian-Fromovitz constraint qualification holds throughout $C_\prtb$, then
solving the former problem boils down to solving the latter. 

We next introduce the SDP relaxation hierarchies proposed in \cite{ABM14} to solve
\labelcref{pb:KKT}.
For $k \in \N$, the primal is given by
\begin{multline}
  \label[prb]{pb:primal-relaxation}
    p^\prtb_k = \inf \big\{ \Lambda(f) \mid \enspace
    \Lambda \in (\R_{2k}[x])^* , \; \Lambda(1) = 1 , \\
    \Lambda(p) \geq 0 , \enspace \forall p \in \langle h^\prtb_1,\dots,h^\prtb_r \rangle_{2k} +
    \mathfrak{P}_k (\prtb-g_1,\dots,\prtb-g_m) \big\} ,
\end{multline}
and the dual problem is
\begin{equation}
  \label[prb]{pb:dual-relaxation}
  d^\prtb_k = \sup \big\{ \gamma \in \R \mid \enspace
  f - \gamma \in \langle h^\prtb_1,\dots,h^\prtb_r \rangle_{2k} +
  \mathfrak{P}_k (\prtb-g_1,\dots,\prtb-g_m) \big\} ,
\end{equation}
where the notation for the truncated ideal $\langle \cdot \rangle_{2k}$ and
the truncated preordering $\mathfrak{P}_k$ is detailed in \cref{sec:polynomials}.
Let us mention however that the dual relaxation hierarchy is based on an SOS representation
of nonnegative polynomials which uses a Schm\"udgen-type certificate.
But contrary to Schm\"udgen's Positivstellensatz \cite[Cor.~3]{Sch91}, compactness is
not required here.

A straightforward combination of \cref{cor:diagonal-perturb} and \cite[Th.~6.3]{ABM14}
leads to the following.

\begin{proposition}
  Let $f, g_1, \dots, g_m: \R^n \to \R$ be polynomial functions such that
  $C_0 = [g_1 \leq 0, \cdots, g_m \leq 0]$ is nonempty.
  Then, for all parameters $\prtb \geq 0$, except for a finite number of them,
  one of the following assertions holds:
  \begin{enumerate}
    \item the relaxations \labelcref{pb:primal-relaxation} and \labelcref{pb:dual-relaxation} of
      \cref{pb:KKT} are exact and provide the value of
      \labelcref{pb:parametric}, i.e.,
      $\val \PNLP = d^\prtb_k = p^\prtb_k$ for all $k$ large enough\footnote{The result of
        Abril Bucero and Mourrain is actually more precise and establishes a link between
        the minimizers of \cref{pb:primal-relaxation} and
        the ones of \cref{pb:parametric}.
        We refer the reader to \cite[Th.~6.3]{ABM14} for a comprehensive presentation.};
    \item for $k$ large enough, the feasible set of
      \cref{pb:primal-relaxation} is empty and
    \labelcref{pb:parametric} has no minimizer.
  \end{enumerate}
\end{proposition}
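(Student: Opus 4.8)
The plan is to reduce the statement to \cite[Th.~6.3]{ABM14} by means of \cref{cor:diagonal-perturb}. First I would dispose of the well-posedness: since $C_0 = [g_1 \le 0, \dots, g_m \le 0]$ is nonempty and $C_0 \subset C_\prtb$ for every $\prtb \ge 0$, the perturbed set $C_\prtb$ is nonempty for all $\prtb \ge 0$, so both \cref{pb:parametric} and \cref{pb:KKT} make sense. Applying \cref{cor:diagonal-perturb} with the trivial partition $I = \{1, \dots, m\}$, $J = \emptyset$, we obtain a \emph{finite} set $\mathcal{A} \subset \R$ such that MFCQ holds throughout $C_\prtb$ for every $\prtb \notin \mathcal{A}$. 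Fix such a $\prtb \in \R_+ \setminus \mathcal{A}$; it then suffices to prove that one of the two assertions holds for this single value, the exceptional finite set in the statement being $\mathcal{A} \cap \R_+$.

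The second ingredient is the reduction recorded just before the statement. Because MFCQ holds at every point of $C_\prtb$, every global minimizer of \cref{pb:parametric} satisfies the KKT system \labelcref{eq:KKT}, hence belongs to the feasible set of \cref{pb:KKT} with the same objective value; conversely that feasible set is contained in $C_\prtb$. Consequently \cref{pb:parametric} admits a minimizer if and only if \cref{pb:KKT} admits a ``KKT minimizer'' in the sense of \cite{ABM14}, the two notions singling out exactly the same set of points, and in that case $\val\PKKT = \val\PNLP$ and this common value equals the minimum of $f$ over that set.

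It then remains to quote \cite[Th.~6.3]{ABM14} applied to \cref{pb:KKT}: either the relaxation hierarchy \labelcref{pb:primal-relaxation}--\labelcref{pb:dual-relaxation} is exact, in which case $p^\prtb_k = d^\prtb_k$ equals the minimum of $f$ over the KKT minimizers of \cref{pb:KKT} for all $k$ large enough --- which by the previous paragraph is $\val\PNLP$, yielding assertion~(i) --- or else, for all $k$ large enough, the feasible set of \cref{pb:primal-relaxation} is empty, a situation that the theorem certifies to be equivalent to the absence of any KKT minimizer of \cref{pb:KKT}, hence by the previous paragraph to the absence of a minimizer of \cref{pb:parametric}, yielding assertion~(ii). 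The step I expect to require the most care is precisely the identification, under MFCQ, of the KKT minimizers of \cref{pb:KKT} with the global minimizers of \cref{pb:parametric}: one inclusion relies on MFCQ forcing every minimizer of \cref{pb:parametric} to be a KKT point and therefore feasible for \cref{pb:KKT} with unchanged value, while the reverse inclusion rests on the containment of the feasible set of \cref{pb:KKT} inside $C_\prtb$ together with the resulting value identity $\val\PKKT = \val\PNLP$. All the remaining arguments are a verbatim application of \cref{cor:diagonal-perturb} and \cite[Th.~6.3]{ABM14}.
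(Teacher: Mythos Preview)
Your proposal is correct and follows exactly the approach the paper indicates: the paper states only that the proposition is ``a straightforward combination of \cref{cor:diagonal-perturb} and \cite[Th.~6.3]{ABM14}'' and gives no further proof, and your write-up spells out precisely that combination (finiteness of singular diagonal perturbations, then MFCQ forcing minimizers to be KKT points so that $\val\PNLP = \val\PKKT$, then invoking \cite[Th.~6.3]{ABM14}). The only cosmetic slip is that the partition $I,J$ is not part of the statement of \cref{cor:diagonal-perturb} itself but of \cref{thm:Milnor-Thom}; the first part of \cref{cor:diagonal-perturb} already gives what you need without mentioning any partition.
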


\appendix
\section{Reminder on semi-algebraic and tame geometry}
\label{sec:tame-geometry}

We recall here the basic results of tame geometry that we use in the present work.
Some references on this topic are \cite{vdDM96,vdD98,Cos99}.

\begin{definition}[see {\cite[Def.~1.4]{Cos99}}]
  An {\em o-minimal} structure on $(\R,+,\cdot)$ is a sequence of Boolean algebras
  $\mathcal{O} = (\mathcal{O}_p)_{p \in \N}$ where each $\mathcal{O}_p$ is a family of
  subsets of $\R^p$ and such that for each $p \in \N$:
  \begin{enumerate}
    \item if $A$ belongs to $\mathcal{O}_p$, then $A \times \R$ and $\R \times A$
      belong to $\mathcal{O}_{p+1}$;
    \item if $\pi: \R^{p+1} \to \R^p$ is the canonical projection onto $\R^p$ then,
      for any $A \in \mathcal{O}_{p+1}$, the set $\pi(A)$ belongs to $\mathcal{O}_p$;
      \label{it:algebraic}
    \item $\mathcal{O}_p$ contains the family of real algebraic subsets of $\R^p$, that is,
      every set of the form $\{ x \in \R^p \mid g(x) = 0 \}$
      where $g: \R^p \to \R$ is a polynomial function;
    \item the elements of $\mathcal{O}_1$ are exactly the finite unions of points and intervals.
  \end{enumerate}
\end{definition}

A subset of $\R^p$ which belongs to an o-minimal structure $\mathcal{O}$ is said to be
{\em definable} (in $\mathcal{O}$).
A function $f: A \subset \R^p \to \R^q$ or a set-valued mapping $F: A \subset \R^p \toto \R^q$
is said to be definable in $\mathcal{O}$ if its graph is definable (in $\mathcal{O}$)
as a subset of $\R^p \times \R^q$.

\begin{example}
  The simplest (and smallest) o-minimal structure is given by the class $\mathcal{SA}$ of
  real {\em semi-algebraic} objects.
 A set $A \subset \R^p$ is called semi-algebraic if it is of the form 
  $A = \bigcup_{j=1}^l \bigcap_{i=1}^k \{x \in \R^p \mid g_{ij}(x) < 0, \; h_{ij}(x) = 0 \}$
  where the functions $g_{ij}, h_{ij}: \R^p \to \R$ are polynomial functions.
  The fact that $\mathcal{SA}$ is an o-minimal structure relies mainly on the Tarski-Seidenberg
  principle (see \cite{BR90}) which asserts that \cref{it:algebraic} holds true in this class.

  Other examples like globally subanalytic sets or sets belonging to the $\log$-$\exp$
  structure provide a vast field of sets and functions that are of primary
  importance for optimizers.
  We will not give proper definitions of these structures in this paper, but the interested
  reader may consult \cite{vdDM96} or \cite{BDL06,Iof07,BDL09} for optimization
  oriented subjects.
\end{example}

In this paper, we shall essentially use the classical results listed hereafter.
In the remainder of this subsection, we fix an o-minimal structure $\mathcal{O}$
on $(\R,+,\cdot)$.

\begin{proposition}[stability results]
  Let $A \subset \R^p$ and $g: A \to \R^p$ be definable objects.
  \begin{itemize}
    \item If $B \subset A$ is a definable set, then $g(B)$ is definable.
    \item If $C \subset \R^q$ is a definable set, then $g^{-1}(C)$ is definable.
    \item If $A$ is open and $g$ is differentiable, then its derivative is definable.
  \end{itemize}
\end{proposition}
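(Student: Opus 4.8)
The plan is to derive all three assertions from one observation that is immediate from the axioms: the family of definable subsets of the spaces $\R^k$ is closed under the formation of sets defined by first-order formulas whose atomic predicates are polynomial (in)equalities and membership in already-definable sets. Indeed, the Boolean-algebra axiom gives closure under finite unions, intersections and complements, i.e.\ under ``and'', ``or'', ``not''; the product axiom, iterated and composed with the (algebraic, hence definable) coordinate permutations, lets one freely pad a definable set with dummy coordinates, so that $A$ definable in $\R^p$ yields $\{(x,y) \in \R^p \times \R^q : x \in A\}$ definable; and the projection axiom gives closure under existential quantification over one coordinate, hence --- since $\forall$ is $\neg\,\exists\,\neg$ --- under arbitrary quantifier prefixes. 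This is the o-minimal analogue of the Tarski--Seidenberg principle invoked earlier for $\mathcal{SA}$, but here it requires no work beyond unwinding the definition. I would state this as a short lemma first, after which each item is a one-line check.

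For the image, write $g(B) = \{ y \in \R^q : \exists\, x \in \R^p,\ x \in B \text{ and } (x,y) \in \graph g \}$. The graph of $g$ is definable by hypothesis, $\{(x,y) : x \in B\}$ is definable by the padding step, their intersection is definable, and projecting out the $x$-coordinates (the projection axiom applied $p$ times) shows $g(B)$ definable. For the preimage the argument is symmetric: $g^{-1}(C) = \{ x \in \R^p : \exists\, y \in \R^q,\ (x,y) \in \graph g \text{ and } y \in C \}$, so one pads $C$ to $\R^p \times C$, intersects with $\graph g$, and projects out $y$.

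For the derivative, first note each component $g_i$ is definable, since its graph is the image of $\graph g$ under a coordinate projection and hence definable by the image item just proved; so it suffices to handle the partial derivatives $\partial g_i/\partial x_j : A \to \R$. Because $g$ is differentiable these partials exist everywhere on $A$, and the graph of $\partial g_i/\partial x_j$ is
\[
  \Big\{ (x,c) \in A \times \R \;:\; \forall \varepsilon > 0 \;\; \exists \delta > 0 \;\; \forall t, \;
  \big(0 < |t| < \delta\big) \Rightarrow \Big| \tfrac{g_i(x+t e_j) - g_i(x)}{t} - c \Big| < \varepsilon \Big\}.
\]
Every ingredient here is admissible: the substitution $x \mapsto x + t e_j$ and the arithmetic combinations are polynomial (so they preserve definability, e.g.\ via the image item), and the remainder is purely logical. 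Hence this set is definable by the lemma, and differentiability of $g$ guarantees it is the graph of a genuine function, namely $\partial g_i/\partial x_j$. Finally the graph of $Dg$ is cut out, inside $A \times \R^{qp}$, by intersecting the (suitably padded) graphs of the $qp$ partials, so it too is definable.

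There is no real obstacle here --- the proposition is essentially a repackaging of the axioms --- but the one place I would be careful is the ``padding with dummy coordinates'' manoeuvre: the product axiom as stated adjoins a single coordinate at one end, so realizing an arbitrary reshuffle such as $A \mapsto \R^p \times C$ or $A \times \R^q$ requires iterating it and conjugating by coordinate permutations, whose definability one should explicitly note (their graphs are algebraic). Everything else is bookkeeping.
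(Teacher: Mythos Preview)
Your proof is correct and follows the standard route one finds in the references the paper cites (\cite{vdDM96,vdD98,Cos99}); the paper itself does not prove this proposition, merely stating it as background in the appendix. One small point worth tightening in your derivative argument: the difference quotient $g_i(x+te_j)$ is only defined for $x+te_j\in A$, so your $\varepsilon$--$\delta$ formula should include the conjunct $x+te_j\in A$ inside the implication (harmless here since $A$ is open and definable), but otherwise everything is in order.
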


\begin{monotonicity}
  \label{lem:monotonicity}
  Let $f: I \subset \R \to \R$ be a definable function and let $k \in \N$.
  Then there exists a finite partition of $I$ into $p$ disjoint intervals $I_1,\dots,I_p$,
  such that the restriction of $f$ to each nontrivial interval $I_j$, $j \in \{1,\dots,p\}$,
  is $\mathcal{C}^k$ and either constant or strictly monotone. 
\end{monotonicity}

\begin{definable-choice}
  \label{lem:definable-choice}
  Let $A \subset \R^p \times \R^q$ be a definable set and let $\pi: \R^p \times \R^q \to \R^p$
  be the canonical projection onto $\R^p$.
  Then there exists a definable function $f: \pi(A) \to \R^q$ such that $\graph f \subset A$.
\end{definable-choice}

Note that an equivalent formulation of the latter result can be stated in terms of selection:
if $F: \R^p \toto \R^q$ is a definable set-valued mapping, then there exists
a definable function $f: \dom F \to \R^q$ such that $\graph f \subset \graph F$.

\begin{curve-selection}
  \label{lem:curve-selection}
  Let $A \subset \R^p$ be a definable set, $x$ be an element of $\operatorname{cl}(A)$,
  the topological closure of $A$, and let $k \in \N$ be a fixed integer.
  Then there exists a $\mathcal{C}^k$ definable path $\gamma: [0,1) \to \R^p$
  such that $\gamma(0) = x$ and $\gamma((0,1)) \subset A$.
\end{curve-selection}

\section{Relaxation in polynomial programming: definitions and notation}
\label{sec:polynomials}

By $\R[x]$ we denote the ring of real polynomials in the variable $x = (x_1,\dots,x_n)$.
For any $k \in \N$, we denote by $\R_k[x]$ the space of real polynomials
whose degree is bounded by $k$, and we denote by $(\R_k[x])^*$ its dual space.

A polynomial $p \in \R[x]$ is a sum of squares (SOS) if $p$ can be written as
$p = \sum_{i \in I} p_i^2$ for some finite family of polynomials $(p_i)_{i \in I} \subset \R[x]$.
Denote by $\Sigma[x]$ the space of SOS polynomials.

Given any integer $k \in \N$ and any finite family $\{ p_1,\dots,p_m \} \subset \R[x]$ of
polynomials, the {\em $k$-truncated ideal} on $\R[x]$ generated by this family is
the subset of $\R[x]$ defined by
\[
  \langle p_1,\dots,p_r \rangle_k := \bigg\{ \sum_{i = 1}^m q_i \, p_i \mid
  q_i \in \R[x], \; \deg(q_i \, p_i) \leq k, \; i = 1,\dots,m \bigg\} ,
\]
where $\deg(p)$ denotes the degree of any polynomial $p \in \R[x]$.
The ideal generated by the family $\{ p_1,\dots,p_m \}$ is denoted and defined
in a similar way but with no condition required on the degree of the polynomials.

For a set $I \subset \{1,\dots,m\}$, we denote by $p_I \in \R[x]$ the polynomial defined by
$p_I := \prod_{i \in I} p_i$, with the convention that $p_\emptyset = 1$.
Then we define the {\em $k$-truncated preordering} of $\{ p_1,\dots,p_m \}$ by
\[
  \mathfrak{P}_k(p_1,\dots,p_m) := \bigg\{ \sum_I q_I \, p_I \mid
  q_I \in \Sigma[x], \; \deg(q_I \, p_I) \leq 2k, \; \forall I \subset \{1,\dots,m\} \bigg\} .
\]

 \section*{Acknowledgments}
 The authors thanks J.~Pang, H.~Frankowska, T.~S.~Pham, J.-B.~Lasserre for their useful comments.

\bibliographystyle{alpha}
\bibliography{references}

\end{document}